\numberwithin{equation}{section}
\theoremstyle{plain}
\newtheorem{theorem}{Theorem}[section]
\newtheorem{lemma}[theorem]{Lemma}
\newtheorem{corollary}[theorem]{Corollary}
\newtheorem{proposition}[theorem]{Proposition}
\theoremstyle{definition}
\newtheorem{remark}[theorem]{Remark}
\newtheorem{example}[theorem]{Example}
\begin{document}

\title[Existence of hyperinvariant subspaces]{Some relationships with subnormal operators and existence of hyperinvariant subspaces}

\author{Maria F. Gamal'}
\address{
 St. Petersburg Branch\\ V. A. Steklov Institute 
of Mathematics\\
 Russian Academy of Sciences\\ Fontanka 27, St. Petersburg\\ 
191023, Russia  
}
\email{gamal@pdmi.ras.ru}

\renewcommand{\thefootnote}{}

\footnote{2020 \emph{Mathematics Subject Classification}: Primary 47A15; Secondary 47B02, 47A60, 47B20.}

\footnote{\emph{Key words and phrases}: hyperinvariant subspace, polynomially bounded operator, subnormal operator, unilateral shift.}




\begin{abstract}
If $T$ is a polynomially bounded operator, $\mathcal M$ is an invariant subspace of $T$, 
$T|_{\mathcal M}$ is a unilateral shift and $T^*|_{\mathcal M^\perp}$ is subnormal, then $T$ has a nontrivial hyperinvariant subspace. 
If an operator $T$ is intertwined from both sides with two operators, one of which is hyponormal and other is the adjoint to hyponormal,
 then $T$ has a nontrivial hyperinvariant subspace. The existence of nontrivial hyperinvariant subspaces for subnormal operators themselves is not studied here. 
\end{abstract}

\maketitle

\section{Introduction}

Let $\mathcal H$ be a (complex, separable) Hilbert space, and let  $\mathcal L(\mathcal H)$ be the algebra of all (bounded linear) operators acting on  $\mathcal H$.
 The algebra of all $R\in\mathcal L(\mathcal H)$  such that $TR=RT$ is called the \emph{commutant} of $T$ and is denoted by  $\{T\}'$.  A (closed) subspace  $\mathcal M$ of  $\mathcal H$ is called \emph{invariant} 
for an operator  $T\in\mathcal L(\mathcal H)$, if $T\mathcal M\subset\mathcal M$, and \emph{hyperinvariant} 
for $T$ if $R\mathcal M\subset\mathcal M$ for all $R\in\{T\}'$. The complete lattice of all invariant (resp.,  hyperinvariant) subspaces for  $T$ is denoted by  $\operatorname{Lat}T$ (resp., by 
$\operatorname{Hlat}T$). 
 The \emph{hyperinvariant subspace problem}  is the question whether for every nontrivial operator 
$T\in\mathcal L(\mathcal H)$ there exists a nontrivial hyperinvariant subspace. 
Here ``nontrivial operator'' means  not a scalar multiple of the identity operator, 
and ``nontrivial subspace'' means  different from  $\{0\}$ and  $\mathcal H$.

Recall that an operator $A\in\mathcal L(\mathcal H)$ is called \emph{subnormal}
 if there exists a complex Hilbert space $\mathcal K$ and a normal operator $N\in\mathcal L(\mathcal K)$ 
 such that $\mathcal H\subset\mathcal K$, $\mathcal H\in\operatorname{Lat}N$ and $A=N|_{\mathcal H}$. Every subnormal
operator $A$ has  a unique (up to unitary equivalence) minimal normal
extension, see {\cite[Corollary II.2.7]{conwaysubnormal}}.

Existence of invariant and hyperinvariant subspaces for operators relating to normal ones in various sense
 is considered, for example, in 
\cite{kimpearcy}, \cite{aglermccarthy}, \cite{jungkopearcy}. 
It is known that subnormal operator is reflexive {\cite[Theorem VII.8.5]{conwaysubnormal}},
 and rationally cyclic subnormal operator  has nontrivial hyperinvariant subspaces
(see {\cite[Corollary V.4.7]{conwaysubnormal}} or \cite{thomson}). For some other results, see \cite{foiasjungkopearcy}. 
But (up the author's knowledge) 
the existence of nontrivial hyperinvariant subspaces for arbitrary subnormal operator is unknown.

In the present paper, 
 the existence of a nontrivial hyperinvariant subspace is proved for 
operators
$T\in\mathcal L(\mathcal H)$ 
which admit an $H^\infty$-functional calculus and have 
$\mathcal H_1\in\operatorname{Lat}T$ such that $T|_{\mathcal H_1}$ is a unilateral shift and $T^*|_{\mathcal H_1^\perp}$ is 
a subnormal operator having a normal extension with spectral measure concentrated on the open unit disc. Under these assumptions, 
there exist singular inner  functions $\theta$ such that $\operatorname{clos}\theta(T)\mathcal M\neq\mathcal M$ for every 
$\mathcal M\in\operatorname{Lat}T$ such that $\mathcal H_1\subset\mathcal M$. Consequently, $T|_{\mathcal M}$ has a nontrivial invariant subspace $\operatorname{clos}\theta(T)\mathcal M$ for every such $\mathcal M$ (Theorem \ref{thmmain1} and Corollaries \ref{coruu} and \ref{cormm}). The proof is based on \cite{est}.  In Remark \ref{rem1} 
a comparison with some results from \cite{kimpearcy} and \cite{jungkopearcy} is given. 

Also, it is proved that if an operator $T$ is intertwined from both sides with two operators, one of which is hyponormal and other is the adjoint to hyponormal,
 then $T$ has a nontrivial hyperinvariant subspace (Theorem \ref{thmmain2}).
 This is a generalization of  {\cite[Theorem 5.1]{aglermccarthy}}.  The proof is based on \cite{r77} and \cite{k20}. 

In the remaining part of Introduction, definitions and main facts concerning intertwining relations of operators are recalled. 

For a Hilbert space  $\mathcal H$ and a (closed) subspace  $\mathcal M$ of $\mathcal H$,  symbols  $P_{\mathcal M}$  and  $I_{\mathcal H}$ denote the orthogonal projection on $\mathcal M$ and the identity operator on $\mathcal H$, resp. As usually, $\mathcal M^\perp=\mathcal H\ominus\mathcal M$. 
The spectrum and the point spectrum of an operator $T$ are denoted by $\sigma(T)$ and  $\sigma_p(T)$, resp. 

For Hilbert spaces  $\mathcal H$ and $\mathcal K$, the symbol   $\mathcal L(\mathcal H, \mathcal K)$ denotes the space of 
(bounded linear) transformations acting from $\mathcal H$ to $\mathcal K$. Suppose that $T\in\mathcal L(\mathcal H)$, $R\in\mathcal L(\mathcal K)$, $X\in\mathcal L(\mathcal H, \mathcal K)$, and $X$  \emph{intertwines} $T$ and $R$, that is, $XT=RX$. If $X$ is unitary, then $T$ and $R$ 
are called  \emph{unitarily equivalent}, in notation $T\cong R$. If $X$ is invertible, that is, $X^{-1}\in\mathcal L(\mathcal K, \mathcal H)$, 
then $T$ and $R$ are called \emph{similar}, written $T\approx R$.
If $X$ is a \emph{quasiaffinity}, that is, $\ker X=\{0\}$ and $\operatorname{clos}X\mathcal H=\mathcal K$, then
$T$ is called a  \emph{quasiaffine transform} of $R$, in notation $T\prec R$. If $T\prec R$ and 
$R\prec T$, then $T$ and $R$ are called  \emph{quasisimilar}, written $T\sim R$. 
If $\ker X=\{0\}$, then we write 
 $T \buildrel i \over \prec R$, while
if $\operatorname{clos}X\mathcal H=\mathcal K$, we write $T \buildrel d \over \prec R$.
If $\mathcal M\in\operatorname{Lat}T$, then 
$T|_{\mathcal M}\buildrel i \over \prec T\buildrel d \over \prec P_{\mathcal M^\perp}T|_{\mathcal M^\perp}$, 
the last relation is realizes by $ P_{\mathcal M^\perp}$. 

 It follows immediately from the definition that 
if $T \buildrel i \over \prec R$, then $\sigma_p(T)\subset\sigma_p(R)$.  
Also,  
 $T \buildrel d \over \prec R$ if and only if  $R^* \buildrel i \over \prec T^*$. Therefore, if $T \buildrel d \over \prec R$ 
and $\sigma_p(R^*)\neq\emptyset$, then $\sigma_p(T^*)\neq\emptyset$; consequently, $T$ has a nontrivial hyperinvariant subspace.  
Recall that if $T\sim R$ and one of $T$ or $R$ has a  nontrivial hyperinvariant subspace, then so does the other.
 Moreover, if $R\buildrel i \over \prec T\buildrel d\over \prec R$ and  $R$ has a  nontrivial hyperinvariant subspace, then  $T$ has a  nontrivial hyperinvariant subspace, too (a particular case of 
{\cite[Theorem 15]{k20}}, see also the references in \cite{k20}). One of the well-known corollaries is that if $T=N\oplus R$
where $N$ is a normal operator and $R$ is an arbitrary operator then $T$ has a nontrivial hyperinvariant subspace
 (unless $T\neq\lambda I$ for some $\lambda\in \mathbb C$).

\section{Some results for polynomially bounded operators}

\subsection{Definitions and preliminaries}

The symbols $\mathbb D$ and $\mathbb T$ denote the open unit disc
and the unit circle, respectively. The normalized Lebesgue measure on $\mathbb T$ is denoted by $m$. 
The symbol $H^\infty$ denotes the Banach algebra of all bounded analytic functions in $\mathbb D$. 
Set $L^2=L^2(\mathbb T,m)$. 
Set $\chi(z)=z$  and $\mathbf{1}(z)=1$ for $z\in\mathbb D\cup \mathbb T$. 
The simple unilateral shift $S$ and  the simple bilateral shift  $U_{\mathbb T}$ are  
 the operators  of multiplication by $\chi$
 on the Hardy space $H^2$ and on $L^2$, respectively.  
Set $H^2_-=L^2\ominus H^2$. 
By $P_+$ and $P_-$ the orthogonal projections from $L^2$ onto $H^2$ and $H^2_-$  
are denoted, respectively.  
Set $S_*=P_-U_{\mathbb T}|_{H^2_-}$.  Then 
\begin{equation} \label{uu} U_{\mathbb T}=\begin{pmatrix} S & (\cdot, \overline\chi)\mathbf{1}  \\ \mathbb O & S_*\end{pmatrix}
\end{equation}
with respect to the decomposition $L^2=H^2\oplus H^2_-$.

An  operator $T\in\mathcal L(\mathcal H)$ is called  \emph{power bounded} if 
$\sup_{n\geq 0}\|T^n\| < \infty$. It is easy to see that for such operators the space 
\begin{equation}\label{hhtt0}\mathcal H_{T,0}=\{x\in\mathcal H :\ \|T^nx\|\to 0\}
\end{equation}
 is hyperinvariant for $T$ ({\cite[Theorem II.5.4]{sfbk}}). The classes $C_{ab}$ of power bounded operators, where 
 $a$ and $b$ can be $0$, $1$, or a dot,  
are defined as follows. If $\mathcal H_{T,0}=\mathcal H$, then  $T$ is  \emph{of class} $C_{0\cdot}$, while if  $\mathcal H_{T,0}=\{0\}$, then $T$ is 
 \emph{of class} $C_{1\cdot}$. Furthermore,  $T$  is \emph{of class} $C_{\cdot a}$, if $T^*$ is of class  $C_{a\cdot}$,  
 and $T$ is  \emph{of class} $C_{ab}$, if $T$ is of class $C_{a\cdot}$ and of class $C_{\cdot b}$, $a$, $b=0,1$. 

 An operator $T\in\mathcal L(\mathcal H)$ is called  \emph{polynomially bounded} if there exists a constant $C$ such that 
$\|p(T)\|\leq C \sup\{|p(z)|:  |z|\leq 1\} $ for every (analytic) polynomial $p$. For a polynomially bounded operator $T\in\mathcal L(\mathcal H)$ 
there exist $\mathcal H_a$, $\mathcal H_s\in\operatorname{Hlat}T$ such that $ \mathcal H=\mathcal H_a\dotplus\mathcal H_s$,   $T|_{\mathcal H_a}$ is an 
 \emph{absolutely continuous} (\emph{a.c.}) polynomially bounded operator, and $T|_{\mathcal H_s}$ is similar to a 
singular unitary operator. Thus, if $\mathcal H_s\neq\{0\}$, then $T$ has nontrivial  hyperinvariant subspaces. The  definition of a.c. polynomially bounded operators is not recalled here,  because it will be not used. We recall only that \emph{$T$ is an  a.c. polynomially bounded operator if and only if $T$ admits an $H^\infty$-functional calculus}
  \cite{mlak}, {\cite[Theorem 23]{k16}}. 

An operator $T\in\mathcal L(\mathcal H)$  is called 
a  \emph{contraction} if $\|T\|\leq 1$. 
A contraction is polynomially bounded with the constant 
$1$ (von Neumann inequality; 
see, for example, {\cite[Proposition I.8.3]{sfbk}}). Clearly, a polynomially bounded operator is power bounded.

For $\varphi\in H^\infty$ set  $\widetilde{\varphi}(z)=\overline{\varphi(\overline z)}$, $z\in\mathbb D$. 
Clearly, $\widetilde{\varphi}\in H^\infty$. If  $T$ is  an a.c. polynomially bounded operator, then  
$\varphi(T^*)=\widetilde\varphi(T)^*$ (\cite{mlak}, {\cite[Proposition 14]{k16}}). 
An a.c. polynomially bounded operator $T$ is called a \emph{$C_0$-operator},  if there exists $0\not\equiv\varphi\in H^\infty$ such that $\varphi(T)=\mathbb O$, see \cite{bp}. If such $T$ is a contraction, $T$ is called a 
 \emph{$C_0$-contraction}, see \cite{sfbk}. If $T$ is a $C_0$-operator, then $\sigma(T)\cap\mathbb D=\sigma_p(T)$, 
see {\cite[Theorem III.5.1]{sfbk}} and \cite{bp}.  

For a power bounded operator $T\in \mathcal L(\mathcal H)$ the 
\begin{equation*} \text{ \emph{isometric asymptote } }(X_{T,+},V),
\end{equation*}
 where 
$V$ is an isometry and $X_{T,+}$ is the \emph{canonical intertwining mapping} is constructed in \cite{k89}. The definition and construction is not recalled here. We recall only that  $X_{T,+}$ realizes the relation $T\buildrel d \over \prec V$ and 
$\ker X_{T,+}=\mathcal H_{T,0}$, where $\mathcal H_{T,0}$ is defined in \eqref{hhtt0}. If the nonzero isometry $V$ is not a  unitary operator or $\{0\}\neq\ker X_{T,+}\neq\mathcal H$ , then $T$ has a nontrivial  hyperinvariant subspace.  
 The \emph{unitary asymptote}
$(X_T, U)$ of $T$  is the minimal unitary extension of the  isometric asymptote $(X_{T,+},V)$ of $T$. 
More precisely, the unitary operator $U$ is the minimal unitary extension of the isometry $V$, and $X_T$ is equal to
 $X_{T,+}$ considered as a transformation from $\mathcal H$ to the space in which $U$ acts. Conversely, if $(X_T, U)$ is the unitary  asymptote of $T$, then $V=U|_{\operatorname{clos}X_T\mathcal H}$ is the isometry from the isometric asymptote of $T$. 
\emph{The isometry $V$ and the  unitary operator $U$ will also be called the isometric and unitary asymptotes of $T$, respectively.}

The following lemma is a simple corollary of the universality of the isometric asymptote (see \cite{k89}).   

\begin{lemma}\label{lemshift} Let $T$ be a power bounded operator. Then $T\buildrel d \over \prec S$ 
if and only if the isometric asymptote of $T$ is not unitary.
\end{lemma}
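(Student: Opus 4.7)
The plan is to prove the two implications separately. For the ``$\Leftarrow$'' direction I will use the Wold decomposition of $V$ to manufacture an intertwiner with dense range from $T$ onto $S$, while the ``$\Rightarrow$'' direction is obtained from the universal property of the isometric asymptote (this is the input advertised in \cite{k89}).

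Assume first that the isometric asymptote $V$ of $T$, acting on $\mathcal{K}_V$, is not unitary. By the Wold decomposition write $V=V_u\oplus V_s$ on $\mathcal{K}_V=\mathcal{K}_u\oplus\mathcal{K}_s$, where $V_u$ is unitary and $V_s$ is a unilateral shift acting on $\mathcal{K}_s\neq\{0\}$. Identify $V_s$ with $S\otimes I_{\mathcal{E}}$ on $H^2(\mathcal{E})$, where $\mathcal{E}\neq\{0\}$ is the wandering subspace of $V_s$; fix a unit vector $e\in\mathcal{E}$ and let $Y_e\colon H^2(\mathcal{E})\to H^2$ be the surjective ``extract the $e$-coordinate'' map. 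Then $Y_e$ intertwines $V_s$ with $S$, the orthogonal projection $P_{\mathcal{K}_s}$ intertwines $V$ with $V_s$, and $X_{T,+}$ has dense range in $\mathcal{K}_V$ (because it realizes $T\buildrel d\over\prec V$). Their composition $W:=Y_e P_{\mathcal{K}_s}X_{T,+}\colon \mathcal{H}\to H^2$ therefore satisfies $WT=SW$ and, being a chain of maps with dense (indeed, surjective after $X_{T,+}$) ranges, has dense range in $H^2$. Thus $T\buildrel d\over\prec S$.

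Conversely, assume $T\buildrel d\over\prec S$, witnessed by $Y\colon\mathcal{H}\to H^2$ with $YT=SY$ and $\operatorname{clos}Y\mathcal{H}=H^2$. By the universality of $(X_{T,+},V)$ there exists a bounded $Z\colon\mathcal{K}_V\to H^2$ with $Y=ZX_{T,+}$ and $ZV=SZ$; since $Y$ has dense range, so does $Z$. If $V$ were unitary, applying $V^*$ on the right in $ZV=SZ$ yields $Z=SZV^*$, and by induction $Z=S^nZV^{*n}$ for every $n\geq 0$, so $Z\mathcal{K}_V\subset\bigcap_{n\geq 0}S^nH^2=\{0\}$. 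This forces $Z=0$, contradicting the density of its range. Hence $V$ cannot be unitary.

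The main obstacle is really just keeping straight which direction needs which tool: the argument is formal once one invokes the universal property of the isometric asymptote for the ``$\Rightarrow$'' implication and the Wold decomposition for the ``$\Leftarrow$'' implication. The only minor nuisance is the passage from a unilateral shift of arbitrary multiplicity to the simple shift $S$, which is handled by the coordinate-extraction map $Y_e$.
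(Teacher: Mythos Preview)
Your proof is correct and follows essentially the same route as the paper: Wold decomposition for the ``$\Leftarrow$'' direction and the universal property of the isometric asymptote (\cite[Theorem~1]{k89}) for the ``$\Rightarrow$'' direction. The only cosmetic difference is that the paper phrases the contradiction in the ``$\Rightarrow$'' step by noting that $S|_{\operatorname{clos}\operatorname{ran}Z}$ would have to be of class $C_{\cdot 1}$, whereas you argue directly that $\operatorname{ran}Z\subset\bigcap_n S^nH^2=\{0\}$; these are equivalent.
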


\begin{proof} Denote by $(X,V)$ the isometric asymptote of $T$. If $V$ is not unitary, then the Wold decomposition of $V$ implies 
$V \buildrel d \over \prec S$. Since  $T\buildrel d \over \prec V$, we conclude $T \buildrel d \over \prec S$.

Conversely, if there exists a transformation $Y$ with dense range such that $YT=SY$, then by {\cite[Theorem 1]{k89}} 
there exists a transformation $Z$ such that $Y=ZX$ and $ZV=SZ$. If $V$ is unitary, then 
$S|_{\operatorname{clos}\operatorname{ran} Z}$ 
must be of class $C_{\cdot 1}$, which implies  $Z=\mathbb O$, a contradiction.  
\end{proof}

The following proposition is well known.

\begin{proposition}\label{prop1} Let  a power bounded $T$ have the form 
\begin{equation*} T=\begin{pmatrix} T_1 & * \\ \mathbb O & T_0\end{pmatrix},
\end{equation*}
 where $T_1\buildrel d \over \prec S$, and $T_0$ is not of class $C_{\cdot 0}$. Then $T$ has a nontrivial hyperinvariant subspace. 
\end{proposition}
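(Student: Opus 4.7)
The plan is to reduce to the class $C_{11}$ case and then exploit the isometric asymptote of $T$.

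First I would show that $\mathcal{H}_{T,0} \ne \mathcal{H}$ and $\mathcal{H}_{T^*,0} \ne \mathcal{H}$. For the first, let $Y_1 \colon \mathcal{H}_1 \to H^2$ realize $T_1 \buildrel d \over \prec S$; then $\|Y_1 x\| = \|S^n Y_1 x\| = \|Y_1 T_1^n x\| \le \|Y_1\|\,\|T_1^n x\|$ forces $\mathcal{H}_{T_1,0} \subset \ker Y_1$, and since $\operatorname{ran} Y_1$ is dense in $H^2 \ne \{0\}$, $\mathcal{H}_{T_1,0} \ne \mathcal{H}_1$; together with the identity $\mathcal{H}_{T,0} \cap \mathcal{H}_1 = \mathcal{H}_{T_1,0}$ (which uses $\mathcal{H}_1 \in \operatorname{Lat} T$) this yields $\mathcal{H}_{T,0} \ne \mathcal{H}$. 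For the second, $\mathcal{H}_0 \in \operatorname{Lat} T^*$ with $T^*|_{\mathcal{H}_0} = T_0^*$ gives $\mathcal{H}_{T^*,0} \cap \mathcal{H}_0 = \mathcal{H}_{T_0^*,0} \ne \mathcal{H}_0$ by the hypothesis that $T_0 \notin C_{\cdot 0}$, hence $\mathcal{H}_{T^*,0} \ne \mathcal{H}$. Since $\mathcal{H}_{T,0} \in \operatorname{Hlat} T$ and $(\mathcal{H}_{T^*,0})^\perp \in \operatorname{Hlat} T$, if either is nonzero we are done; so I may assume $T \in C_{11}$.

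Now let $(X,V)$ be the isometric asymptote of $T$; since $T \in C_{1\cdot}$, $X$ is injective. Using the commutant-lifting property of the asymptote (every $R \in \{T\}'$ induces $\tilde R \in \{V\}'$ with $XR = \tilde R X$, see \cite{k89}), the set $\mathcal{M} := X^{-1}(\mathcal{K}_u)$, where $\mathcal{K}_u$ is the unitary summand in the Wold decomposition $V = V_s \oplus V_u$ on $\mathcal{K} = \mathcal{K}_s \oplus \mathcal{K}_u$, belongs to $\operatorname{Hlat} T$. When $V$ is not unitary, $\mathcal{M} \ne \mathcal{H}$ by density of $\operatorname{ran} X$; and $\mathcal{M} \ne \{0\}$ follows by contradiction: otherwise $P_{\mathcal{K}_s} X$ would be an injective dense-range intertwiner of $T$ with the shift $V_s$, so after projecting onto one shift summand one obtains $T \buildrel d \over \prec S$, which dualizes to an injective $Z^* \colon H^2 \to \mathcal{H}$ with $T^* Z^* = Z^* S^*$; then $T^{*n} Z^* y = Z^* S^{*n} y \to 0$ for every $y \in H^2$, which combined with $T \in C_{\cdot 1}$ (so $\mathcal{H}_{T^*,0} = \{0\}$) forces $Z^* = 0$, contradicting injectivity.

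The remaining subcase is $V$ unitary, in which $V$ coincides with the unitary asymptote $U$ of $T$ and $T \prec U$ via the quasiaffinity $X$. Here $U \ne \lambda I$ (otherwise $X(T - \lambda I) = 0$ and injectivity of $X$ would give $T = \lambda I$, contradicting $T_1 \buildrel d \over \prec S$, since $S$ has no eigenvalues), so $U$ admits nontrivial spectral subspaces, which one pulls back via $X$ using the commutant lifting. The principal obstacle I foresee is verifying that such a pullback is nonzero in the $C_{11}$ case; this requires finer asymptote-theoretic input for power bounded $C_{11}$ operators beyond what is explicitly recalled in the excerpt.
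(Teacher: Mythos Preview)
Your first paragraph already contains the paper's entire proof. The paper argues exactly that $T_1\buildrel d\over\prec S$ forces $T_1\notin C_{0\cdot}$, hence $T\notin C_{0\cdot}$; and $T_0\notin C_{\cdot 0}$ forces $T\notin C_{\cdot 0}$. It then simply cites {\cite[Theorem II.5.4]{sfbk}} and \cite{k89} for the conclusion. So everything from your second paragraph onward is an attempt to reprove a result the paper treats as known.

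That attempt has the gap you yourself flag. In the $C_{11}$ case with unitary asymptote $U$, having only $T\prec U$ via $X$ is not enough to pull back a nontrivial spectral subspace: the preimage $X^{-1}(\mathcal E)$ of a proper reducing subspace $\mathcal E$ of $U$ is indeed hyperinvariant and $\neq\mathcal H$ (by density of $\operatorname{ran}X$), but nothing so far prevents it from being $\{0\}$. The missing ingredient is precisely what K\'erchy supplies in \cite{k89}: for a power bounded $T\in C_{11}$ the unitary asymptote construction yields a \emph{quasisimilarity} $T\sim U$ (there is also a quasiaffinity back from $U$ to $T$, coming from the asymptote of $T^*$ and the duality between the asymptotes of $T$ and $T^*$). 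Once $T\sim U$, the transfer of nontrivial hyperinvariant subspaces is standard, and your observation that $U\neq\lambda I$ finishes the job. Your paragraph~2, incidentally, is correct but can be read as a proof that for $T\in C_{11}$ the isometric asymptote is automatically unitary (the contradiction you derive shows the shift part must be absent), which is consistent with the theory.
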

\begin{proof}
The assumption $T_1\buildrel d \over \prec S$ implies that $T_1$ is not of class $C_{0\cdot}$. Consequently, $T$ 
 is not of class $C_{0\cdot}$. The assumption ``$T_0$ is not of class $C_{\cdot 0}$'' implies that $T$ 
 is not of class $C_{\cdot 0}$. Now the conclusion of the proposition follows from 
{\cite[Theorem II.5.4]{sfbk}} or \cite{k89}. \end{proof}

The following lemma is very simple. For the proof, see {\cite[Lemma 1.3]{g19}}.

\begin{lemma}\label{lem13} Suppose  that $T$ and $R$ are a.c. polynomially bounded operators,
$T \buildrel d \over \prec R$ and there exists $\varphi\in H^\infty$ such that $\operatorname{ran} \varphi(R)$ is not dense. 
Then $\operatorname{ran} \varphi(T)$ is not  dense.
\end{lemma}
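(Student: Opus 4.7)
My plan is a short contrapositive argument driven entirely by the intertwining relation lifted through the $H^\infty$-functional calculus.

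First I would note that, since $T$ and $R$ are a.c.\ polynomially bounded, both admit $H^\infty$-functional calculi (Mlak, {\cite[Theorem 23]{k16}}). Letting $X\in\mathcal L(\mathcal H,\mathcal K)$ be a transformation realizing $T\buildrel d\over\prec R$ (so $XT=RX$ and $\operatorname{clos}X\mathcal H=\mathcal K$), the standard approximation of $\varphi\in H^\infty$ by polynomials in the appropriate topology, together with the continuity properties of the $H^\infty$-calculus, upgrades $XT=RX$ to
\begin{equation*}
X\varphi(T)=\varphi(R)X.
\end{equation*}
This is the only nontrivial ingredient, and it is merely invoked.

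Next I would argue by contraposition. Assume $\operatorname{clos}\operatorname{ran}\varphi(T)=\mathcal H$. For any $h\in\mathcal H$, pick $h_n\in\mathcal H$ with $\varphi(T)h_n\to h$. Applying $X$ and using the intertwining identity,
\begin{equation*}
Xh=\lim_n X\varphi(T)h_n=\lim_n \varphi(R)Xh_n\in\operatorname{clos}\operatorname{ran}\varphi(R).
\end{equation*}
Hence $X\mathcal H\subset\operatorname{clos}\operatorname{ran}\varphi(R)$. Taking closures and using the density of $X\mathcal H$ in $\mathcal K$ yields $\mathcal K=\operatorname{clos}X\mathcal H\subset\operatorname{clos}\operatorname{ran}\varphi(R)$, contradicting the hypothesis that $\operatorname{ran}\varphi(R)$ is not dense.

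There is essentially no obstacle here: the whole proof is the diagram-chase $X\varphi(T)=\varphi(R)X$ combined with the density of $X\mathcal H$. The only point where one has to be the slightest bit careful is invoking the intertwining identity for $\varphi\in H^\infty$ rather than merely for polynomials, which is why the a.c.\ polynomial boundedness of both $T$ and $R$ is built into the hypotheses.
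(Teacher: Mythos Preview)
Your argument is correct and is exactly the expected one: extend the intertwining $XT=RX$ to $X\varphi(T)=\varphi(R)X$ via the $H^\infty$-calculus for a.c.\ polynomially bounded operators, then push the dense range through. The paper itself does not spell out a proof but simply refers to {\cite[Lemma 1.3]{g19}}, whose content is precisely this diagram chase, so your proposal matches the intended approach.
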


The following proposition is well known.

\begin{proposition}\label{propaann} Let $A$ be a subnormal power bounded operator of class $C_{0\cdot}$, 
and let $N$ be the minimal normal extension of $A$. 
Then $N$ is a contraction of class $C_{00}$.
\end{proposition}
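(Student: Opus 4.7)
The plan is to first show $N$ is a contraction using the spectral inclusion for subnormal operators, and then establish the $C_{00}$ property by combining the $C_{0\cdot}$ hypothesis with minimality of the extension and normality of $N$.

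For the contraction part: since $A$ is power bounded, its spectral radius satisfies $r(A)\leq 1$, so $\sigma(A)\subseteq\overline{\mathbb D}$. The standard spectral inclusion for subnormal operators with minimal normal extensions (see \cite[Proposition~II.2.11]{conwaysubnormal}) gives $\sigma(N)\subseteq\sigma(A)$. Because $N$ is normal, $\|N\|=r(N)\leq r(A)\leq 1$, so $N$ is a contraction.

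For the $C_{0\cdot}$ property of $N$: since $\mathcal H\in\operatorname{Lat}N$ and $A=N|_{\mathcal H}$, the hypothesis $A\in C_{0\cdot}$ means $\|N^n h\|\to 0$ for every $h\in\mathcal H$. Consider the closed subspace $\mathcal K_{N,0}=\{x\in\mathcal K:\|N^nx\|\to 0\}$ from \eqref{hhtt0}, which is hyperinvariant for $N$. Since $N$ is normal, $N^*\in\{N\}'$, so $\mathcal K_{N,0}$ is invariant under $N^*$. Thus $\mathcal K_{N,0}\supset\bigvee_{n\geq 0}N^{*n}\mathcal H$, and by the minimality of the normal extension the right-hand side equals $\mathcal K$. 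Hence $\mathcal K_{N,0}=\mathcal K$, i.e., $N\in C_{0\cdot}$.

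Finally, for $N\in C_{\cdot 0}$: since $N$ is normal, $N^{*n}N^n=N^nN^{*n}$, so
\begin{equation*}
\|N^nx\|^2=\langle N^{*n}N^nx,x\rangle=\langle N^nN^{*n}x,x\rangle=\|N^{*n}x\|^2
\end{equation*}
for every $x\in\mathcal K$ and every $n$. Therefore $\|N^{*n}x\|\to 0$ for all $x$, which is the $C_{\cdot 0}$ property. Combined with $N\in C_{0\cdot}$, this gives $N\in C_{00}$.

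The argument is quite direct; the only mild subtlety is invoking the spectral inclusion $\sigma(N)\subseteq\sigma(A)$ to upgrade power boundedness of $A$ to $N$ being a contraction, which would otherwise not be immediate from $\|A\|\leq\sup_n\|A^n\|$ alone. The rest is a routine interplay of minimality and normality.
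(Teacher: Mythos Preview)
Your proof is correct and follows essentially the same strategy as the paper's: use the spectral theory of subnormal operators to show $N$ is a contraction, then combine minimality of the extension with normality to propagate $C_{0\cdot}$ from $A$ to $N$ and thence to $C_{00}$. The only cosmetic differences are that the paper deduces $\|N\|\le 1$ via $\|N\|=\|A\|=r(A)$ rather than via $\sigma(N)\subseteq\sigma(A)$, and it obtains $C_{\cdot 0}$ by noting that $C_{0\cdot}$ forces the spectral measure onto $\mathbb D$, whereas your direct computation $\|N^{*n}x\|=\|N^nx\|$ is arguably cleaner.
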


\begin{proof} Let $r(A)$ denote the spectral radius of $A$. Since $A$ is power bounded, $r(A)\leq 1$. 
Since $A$ is subnormal, $r(A)=\|A\|$ by  {\cite[Corollary II.2.12]{conwaysubnormal}}. 
Since $\|A\|=\|N\|$ by  {\cite[Theorem II.2.11]{conwaysubnormal}}, we conclude that $N$ is a contraction.

Denote by $\mathcal H$ and $\mathcal K$ the space on which $A$ and $N$ acts. 
Then $A=N|_{\mathcal H}$
and \begin{equation}\label{dense}\mathcal K=\vee_{k=0}^\infty N^{*k} \mathcal H.\end{equation}
Let $x\in\mathcal H$, and let $k\geq 0$. Then 
\begin{equation*}\|N^nN^{*k}x\| = \|N^{*k}N^nx\|\leq\|N^nx\|=\|A^nx\|\to 0 \ \text{ when } \ n\to \infty, 
\end{equation*}
because $A$ is of class $C_{0\cdot}$. The last relation, \eqref{dense} and the contractivity of $N$ imply  
that $N$ is of class $C_{0\cdot}$. Consequently,   the  spectral measure of $N$ is concentrated on $\mathbb D$. 
This implies that   $N$ is of class $C_{00}$. 
\end{proof}

\subsection{Main result}

The following proposition is cited from \cite{g19} for reader's convenience.

\begin{proposition}[{\cite[Proposition 3.4]{g19}}] \label{prop34}  Suppose  that  $T_0\in\mathcal L(\mathcal H_0)$ 
is an a.c. polynomially bounded operator, 
$X_0\in\mathcal L(\mathcal H_0, H^2_-)$, $\operatorname{clos}X_0\mathcal H_0=H^2_-$, and $X_0 T_0=S_* X_0$. 
Set
\begin{equation*}\mathbf{T}=\begin{pmatrix} S &  (\cdot, X_0^*\overline\chi)\mathbf{1}  \\ \mathbb O & T_0\end{pmatrix}. 
\end{equation*}
Let $\theta$ be an inner function. Then $\ker \theta(\mathbf{T})^*\neq\{0\}$ if 
 and only if there exists $x_0\in\mathcal H_0$ such that $x_0\notin X_0^* H^2_-$ and
$\theta(T_0)^* x_0 \in  X_0^*  H^2_-$.
\end{proposition}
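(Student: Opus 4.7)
The plan is to identify $\theta(\mathbf{T})$ as a $2\times 2$ block matrix, express $\ker\theta(\mathbf{T})^*$ as two coordinate equations, and then translate the second equation into the stated existence statement via an orthogonal decomposition of $H^2_-$ induced by $\theta$. I first verify that the map $I_{H^2}\oplus X_0\colon H^2\oplus\mathcal{H}_0\to H^2\oplus H^2_-=L^2$ satisfies $(I_{H^2}\oplus X_0)\mathbf{T}=U_{\mathbb T}(I_{H^2}\oplus X_0)$, which follows from \eqref{uu}, the identity $(x,X_0^*\overline\chi)=(X_0x,\overline\chi)$, and $X_0T_0=S_*X_0$. Lifting this intertwining to the $H^\infty$-calculus and comparing with $M_\theta$ in its upper-triangular form on $L^2=H^2\oplus H^2_-$ gives
\[
\theta(\mathbf{T})=\begin{pmatrix}\theta(S)&P_+M_\theta X_0\\ \mathbb{O}&\theta(T_0)\end{pmatrix}.
\]
A direct adjoint computation yields $(P_+M_\theta X_0)^*y=X_0^*P_-(\overline\theta y)$, which for $y\in K_\theta:=H^2\ominus\theta H^2=\ker\theta(S)^*$ simplifies to $X_0^*(\overline\theta y)$ because $\overline\theta y\in H^2_-$. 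Therefore $(y,z)\in\ker\theta(\mathbf{T})^*$ if and only if $y\in K_\theta$ and $\theta(T_0)^*z=-X_0^*(\overline\theta y)$.

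Next I record the orthogonal decomposition
\[
H^2_-=\overline\theta K_\theta\oplus\overline\theta H^2_-.
\]
Since $M_{\overline\theta}$ is unitary on $L^2$ and $H^2\subset\overline\theta H^2$ (because $\theta H^2\subset H^2$), one has $L^2=\overline\theta H^2\oplus\overline\theta H^2_-$, hence $H^2_-=(\overline\theta H^2\ominus H^2)\oplus\overline\theta H^2_-$, and unwinding orthogonality shows $\overline\theta H^2\ominus H^2=\overline\theta H^2\cap H^2_-=\overline\theta K_\theta$. I also need the transport identity $\theta(T_0)^*X_0^*h'=X_0^*(\overline\theta h')$ for $h'\in H^2_-$, which follows from $X_0T_0=S_*X_0$ together with the fact that $\theta(S_*)^*h'=P_-(\overline\theta h')=\overline\theta h'$ on $H^2_-$.

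For the ``if'' direction, given $x_0\notin X_0^*H^2_-$ with $\theta(T_0)^*x_0=X_0^*h$ for some $h\in H^2_-$, I split $h=h_1+\overline\theta h_3$ with $h_1\in\overline\theta K_\theta$ and $h_3\in H^2_-$; the transport identity turns this into $\theta(T_0)^*(x_0-X_0^*h_3)=X_0^*h_1$. Writing $h_1=\overline\theta y$ with $y\in K_\theta$ and setting $z:=-(x_0-X_0^*h_3)$ produces a kernel element $(y,z)$, and the hypothesis $x_0\notin X_0^*H^2_-$ forces $z\neq 0$. For the ``only if'' direction, given $(y,z)\in\ker\theta(\mathbf{T})^*\setminus\{0\}$, I set $x_0:=z$; then $\theta(T_0)^*x_0=-X_0^*(\overline\theta y)\in X_0^*H^2_-$ automatically, and if $z=X_0^*h'$ for some $h'\in H^2_-$, the transport identity would give $\theta(T_0)^*z=X_0^*(\overline\theta h')$, hence $X_0^*\overline\theta(y+h')=0$; injectivity of $X_0^*$ (from $\operatorname{clos}X_0\mathcal{H}_0=H^2_-$) together with $|\overline\theta|=1$ a.e.\ then force $y+h'=0$ in $L^2$, and $H^2\cap H^2_-=\{0\}$ forces $y=h'=0$, whence $z=0$, a contradiction. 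The main subtlety is the decomposition $H^2_-=\overline\theta K_\theta\oplus\overline\theta H^2_-$; once it is in hand, both implications reduce to algebraic bookkeeping via the transport identity.
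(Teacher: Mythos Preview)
The paper does not prove this proposition; it is quoted verbatim from \cite[Proposition~3.4]{g19} ``for reader's convenience,'' so there is no in-paper argument to compare against. Your proof is correct and self-contained: the intertwining $(I_{H^2}\oplus X_0)\mathbf T=U_{\mathbb T}(I_{H^2}\oplus X_0)$ does lift to the $H^\infty$-calculus (both $\mathbf T$ and $U_{\mathbb T}$ are a.c.\ polynomially bounded, and the off-diagonal block is pinned down because the first component of $I_{H^2}\oplus X_0$ is the identity), the identification $\theta(S_*)^*=M_{\overline\theta}|_{H^2_-}$ that underlies your transport identity is valid since $\overline\theta H^2_-\subset H^2_-$, and the orthogonal decomposition $H^2_-=\overline\theta K_\theta\oplus\overline\theta H^2_-$ follows exactly as you indicate. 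Both implications then go through; in the ``only if'' direction your contradiction argument also covers the case $z=0$ implicitly, since $0\in X_0^*H^2_-$.
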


The following lemma is a corollary of {\cite[Lemma 5.6]{est}}.

\begin{lemma}\label{lemthetanormal} Suppose that  $N$ is a normal contraction of class $C_{00}$, 
$\mathcal H_0\in\operatorname{Lat}N$, and $Y\in\mathcal L(H^2_-,\mathcal H_0)$ is such that
 $Y(S_*)^*=N|_{\mathcal H_0}Y$ 
and $\ker Y=\{0\}$. Then there exist a singular inner function $\theta$ and $x_0\in\mathcal H_0$ such that 
$x_0\not\in YH^2_-$ and $\widetilde\theta(N|_{\mathcal H_0})x_0\in YH^2_-$.
\end{lemma}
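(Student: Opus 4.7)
The plan is to reduce the statement to \cite[Lemma 5.6]{est} by transferring the intertwining from $(S_*)^*$ to the canonical unilateral shift $S$ via a natural unitary equivalence.

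First, I would observe that $S_*$ is the simple backward shift on $H^2_-$ (a short direct computation: $S_*\bar\chi = P_-\mathbf{1}=0$ and $S_*\bar z^{n+1} = \bar z^n$ for $n\geq 1$), so $(S_*)^*$ is a unilateral shift of multiplicity one and the map $J\colon H^2_-\to H^2$ determined by $J\bar z^n = z^{n-1}$ ($n\geq 1$) is a unitary satisfying $J(S_*)^* = SJ$. Set $\widetilde Y = YJ^{-1}\in\mathcal L(H^2,\mathcal H_0)$. From $Y(S_*)^* = N|_{\mathcal H_0}Y$ we get $\widetilde YS = N|_{\mathcal H_0}\widetilde Y$, and $\ker\widetilde Y = J\ker Y = \{0\}$. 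Moreover, $\widetilde YH^2 = YJ^{-1}H^2 = YH^2_-$ as sets (not merely their closures).

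Now the conclusion we want to establish becomes: there exist a singular inner function $\theta$ and a vector $x_0\in\mathcal H_0$ with $x_0\notin\widetilde YH^2$ and $\widetilde\theta(N|_{\mathcal H_0})x_0\in\widetilde YH^2$, where $N$ is a normal contraction of class $C_{00}$, $\mathcal H_0\in\operatorname{Lat}N$, and $\widetilde Y$ is an injection intertwining $S$ with $N|_{\mathcal H_0}$. This is exactly the form in which \cite[Lemma 5.6]{est} is available, so applying it to the triple $(N,\mathcal H_0,\widetilde Y)$ yields the desired $\theta$ and $x_0$. Translating the conclusion back through the identity $\widetilde YH^2 = YH^2_-$ gives the statement of the lemma.

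The only nontrivial step is the bookkeeping with the tilde convention: one must match the way \cite[Lemma 5.6]{est} formulates its output with the form $\widetilde\theta(N|_{\mathcal H_0})$ that appears here. The relevant identity is $\varphi(T^*) = \widetilde\varphi(T)^*$ (recalled in the preliminaries, and in the normal case just the obvious $\widetilde\varphi(N) = \varphi(N^*)^*$), which governs how the functional calculus transfers under the unitary $J$; combined with the fact that $\varphi\mapsto\widetilde\varphi$ is an involution of $H^\infty$ preserving the class of singular inner functions, this converts any discrepancy between $\theta$ and $\widetilde\theta$ in the statement of \cite[Lemma 5.6]{est} into the form required here. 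This is the step I would need to verify most carefully, but beyond that the lemma is a straightforward rewriting of the cited result.
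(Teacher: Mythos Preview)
Your reduction via the unitary $J$ is harmless, but the heart of your argument is the claim that \cite[Lemma 5.6]{est} ``is exactly the form'' of the desired conclusion and can be applied to the triple $(N,\mathcal H_0,\widetilde Y)$. This is a misreading of that lemma. In the paper's proof one sees precisely what \cite[Lemma 5.6]{est} actually delivers: given a continuous increasing weight $u\colon(0,1)\to(0,\infty)$ with $u(r)\to\infty$, it produces a singular inner function $\vartheta$ satisfying
\[
\sup_{z\in\mathbb D}\frac{1}{u(|z|)\,|\vartheta(z)|}<\infty.
\]
This is a purely function-theoretic statement about the existence of singular inner functions with a prescribed lower bound on the modulus; it says nothing about normal operators, invariant subspaces, or intertwining maps, and there is no ``triple $(N,\mathcal H_0,\widetilde Y)$'' to which it can be applied.

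Consequently the substantive work of the lemma is still missing from your proposal. The paper's proof proceeds as follows: pick $h_0\in H^2$ with no singular inner factor and set $y_0=Y\overline\chi\overline{h_0}$; use the spectral decomposition of $N$ (whose spectral measure lives on $\mathbb D$ since $N$ is a $C_{00}$ normal contraction) to write $y_0=\oplus_n f_n$; build from the $f_n$ a finite measure $\alpha$ on $\mathbb D$ and then a weight $u$ with $\int_{\mathbb D}u(|z|)^2\,\mathrm d\alpha(z)<\infty$; apply \cite[Lemma 5.6]{est} to this $u$ to obtain $\vartheta$; define $x_0=\oplus_n f_n/\vartheta$ and use dominated convergence (with the bound coming from $u$) to show $x_0\in\mathcal H_0$ and $\vartheta(N|_{\mathcal H_0})x_0=y_0$; finally, use $\ker Y=\{0\}$ and the fact that $h_0$ has no singular inner factor to show $x_0\notin YH^2_-$, and set $\theta=\widetilde\vartheta$. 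None of these steps is supplied by your reduction, and in particular the construction of $x_0$ and the verification that it lies in $\mathcal H_0$ but not in $YH^2_-$ are where the actual content resides.
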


\begin{proof} Take $0\not\equiv h_0\in H^2$ such that  $h_0$ has no  singular inner factor.
 Set  $y_0 = Y\overline\chi\overline h_0$. 

Let  $\mu$  be a scalar-valued spectral measure for $N$. Then $\mu$ is  a positive  Borel  measure on $\mathbb D$, 
and 
\begin{equation*}N \cong \oplus_{n=1}^\infty N_{\mu|\Delta_n},
\end{equation*}
where $\Delta_n\subset\mathbb D$ are Borel sets and $N_{\mu|\Delta_n}$ is the operator 
of multiplication by $\chi$ 
on $L^2(\Delta_n,\mu)$. 
(Note that it is not assumed here that $\Delta_n$ are disjoint; moreover, it is possible that 
$\Delta_n=\Delta_k$ for some $n\neq k$. On the other hand, it is possible that $\Delta_n=\emptyset$ for sufficiently large $n$.) We may assume that
\begin{equation*}N = \oplus_{n=1}^\infty N_{\mu|\Delta_n},
\end{equation*}
then $y_0=\oplus_{n=1}^\infty f_n$, where $f_n\in L^2(\Delta_n,\mu)$. 
Set 
\begin{equation*}
\mathrm{d}\alpha(z)=\Big(\sum_{n=1}^\infty|f_n(z)|^2\Big)\mathrm{d}\mu(z).
\end{equation*}
 Then $\alpha(\mathbb D)<\infty$.

Let $0<r_1<\ldots<r_k<r_{k+1}<\ldots<1$ and $r_k\to 1$ when $k\to\infty$. Set 
\begin{equation*}
c_1= \alpha(\{|z|\leq r_1\}) \ \text{ and }  \ c_k= \alpha(\{r_{k-1}<|z|\leq r_k\}), \ \ k\geq 2.
\end{equation*}
Then $\sum_{k=1}^\infty c_k<\infty$. Consequently, there exists a sequence $\{A_k\}_{k=1}^\infty$ such that 
$A_{k+1}>A_k>0$ for every $k\geq 1$,  $A_k\to \infty$ when $k\to\infty$, and $\sum_{k=1}^\infty A_k^2 c_k<\infty$.  
It is easy to construct a function $u\colon(0,1)\to (0,\infty)$ which is continuous, strongly increasing,
 such that  $u(r)\leq A_k$  for $r_{k-1}<r\leq r_k$, $k\geq 2$,  and $u(r)\to\infty$ when $r\to 1$. 
It is easy to see that 
\begin{equation*} \int_{\mathbb D}u(|z|)^2\mathrm{d}\alpha(z)<\infty.\end{equation*}

By {\cite[Lemma 5.6]{est}}, there exists a singular inner function $\vartheta$ such that 
\begin{equation}\label{utheta} \sup_{z\in\mathbb D}\frac{1}{u(|z|)|\vartheta(z)|}=C<\infty.\end{equation}
 
Set $x_0= \oplus_{n=1}^\infty \{f_n/\vartheta\}_{n=1}^\infty$. 
For $0<r<1$ set $\varphi_r(z)=1/\vartheta(rz)$, $z\in\mathbb D$. Then $\varphi_r$ is a function from the disk algebra. 
Since $y_0\in\mathcal H_0$, we have  
 \begin{equation*}\varphi_r(N)y_0=\oplus_{n=1}^\infty \varphi_r f_n\in\mathcal H_0. \end{equation*}
Furthermore, 
\begin{align*}& \Big|\varphi_r(z)-\frac{1}{\vartheta(z)}\Big|^2  \leq\Big(|\varphi_r(z)|+\Big|\frac{1}{\vartheta(z)}\Big|\Big)^2
\leq 2\Big(|\varphi_r(z)|^2+\Big|\frac{1}{\vartheta(z)}\Big|^2\Big)\\& = 
2\Big(\frac{1}{|\vartheta(rz)|^2u(r|z|)^2}\frac{u(r|z|)^2}{u(|z|)^2}+\frac{1}{|\vartheta(z)|^2u(|z|)^2}\Big)u(|z|)^2
\leq 4C^2u(|z|)^2,
\end{align*}
where $C$ is from \eqref{utheta}, because $u(r|z|)\leq u(|z|)$. 
Since $\varphi_r(z)\to1/\vartheta(z)$ when $r\to 1$ for every $z\in\mathbb D$, the Lebesgue convergence theorem implies 
that  
\begin{align*} \|\varphi_r(N)y_0-x_0\|^2&
=\sum_{n=1}^\infty\int_{\mathbb D}\Big|\varphi_rf_n-\frac{1}{\vartheta}f_n\Big|^2\mathrm{d}\mu 
\\&=\int_{\mathbb D}\Big(\sum_{n=1}^\infty|f_n|^2\Big)\Big|\varphi_r-\frac{1}{\vartheta}\Big|^2\mathrm{d}\mu 
\\&= \int_{\mathbb D}\Big|\varphi_r-\frac{1}{\vartheta}\Big|^2\mathrm{d}\alpha \to 0 \text{ when } r\to 1.
\end{align*}
Thus, $x_0\in\mathcal H_0$. It follows from the definition of $x_0$ that $\vartheta(N|_{\mathcal H_0})x_0=y_0$. 

If $x_0=  Y\overline\chi\overline g$ for some  $g\in H^2$, 
then 
\begin{align*} y_0=\vartheta(N|_{\mathcal H_0})x_0=
 \vartheta(N|_{\mathcal H_0})Y\overline\chi\overline g 
=Y\vartheta((S_*)^*)\overline\chi\overline g=
Y \overline{\widetilde\vartheta}\overline\chi\overline g,
\end{align*}
which contradicts with the choice of $y_0$. Thus, $\theta=\widetilde\vartheta$ and $x_0$ satisfy the conclusion of the lemma.
\end{proof}

\begin{theorem}\label{thmmain1}Let an a.c.  polynomially bounded operator $T$ have the form 
\begin{equation} \label{tt120} T=\begin{pmatrix} T_1 & T_2 \\ \mathbb O & T_0\end{pmatrix},
\end{equation}
 where $T_1\buildrel d \over \prec S$, and $T_0^*$ is subnormal. Then $T$ has a nontrivial hyperinvariant subspace. 

Moreover, if $T_0$ is of class $C_{\cdot 0}$, then there exists a singular inner function $\theta$ such that 
 $\operatorname{ran} \theta(T)$ is not  dense.
\end{theorem}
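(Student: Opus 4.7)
The plan is to split the argument into two cases according to whether $T_0$ is of class $C_{\cdot 0}$. If $T_0$ is \emph{not} of class $C_{\cdot 0}$, then Proposition \ref{prop1} applies directly to the block form of $T$ (using $T_1 \buildrel d \over \prec S$) and produces a nontrivial hyperinvariant subspace, settling that case. So assume $T_0$ is of class $C_{\cdot 0}$: then $T_0^*$ is subnormal and of class $C_{0\cdot}$, and Proposition \ref{propaann} tells us that the minimal normal extension $N$ of $T_0^*$ is a contraction of class $C_{00}$ with spectral measure supported on $\mathbb D$. The task is then to prove the stronger ``Moreover'' statement, namely to exhibit a singular inner $\theta$ with $\operatorname{ran}\theta(T)$ not dense, since that already forces $\operatorname{clos}\theta(T)\mathcal H$ to be a nontrivial hyperinvariant subspace of $T$.

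Next, I will use $T_1 \buildrel d \over \prec S$ to fix an intertwiner $Y_1\colon \mathcal H_1 \to H^2$ with dense range satisfying $Y_1 T_1 = S Y_1$, and form the auxiliary operator $\widetilde T = \begin{pmatrix} S & Y_1 T_2 \\ \mathbb O & T_0 \end{pmatrix}$ on $H^2 \oplus \mathcal H_0$. A short computation shows that $Y_1 \oplus I_{\mathcal H_0}$ is a dense-range intertwiner, so $T \buildrel d \over \prec \widetilde T$; by Lemma \ref{lem13} it therefore suffices to produce a singular inner $\theta$ for which $\operatorname{ran}\theta(\widetilde T)$ fails to be dense, since the same then transfers to $T$.

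To produce $\theta$, I will construct an injective intertwiner $Y\colon H^2_- \to \mathcal H_0$ satisfying $Y S_*^{*} = (N|_{\mathcal H_0})Y = T_0^* Y$, and feed it to Lemma \ref{lemthetanormal}; the lemma supplies a singular inner $\theta$ and an $x_0 \in \mathcal H_0$ with $x_0 \notin Y H^2_-$ and $\widetilde\theta(T_0^*) x_0 \in Y H^2_-$. Since $\widetilde\theta(T_0^*) = \theta(T_0)^*$, this is precisely the criterion that Proposition \ref{prop34}, applied with $X_0 := Y^*$, converts into $\ker \theta(\mathbf T)^* \neq \{0\}$ for the model operator $\mathbf T = \begin{pmatrix} S & (\cdot, Y\overline\chi)\mathbf{1} \\ \mathbb O & T_0 \end{pmatrix}$. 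One then establishes a dense-range intertwining $\widetilde T \buildrel d \over \prec \mathbf T$ (built from $Y_1$, $T_2$, and $Y$), and another application of Lemma \ref{lem13} yields $\operatorname{ran}\theta(\widetilde T)$ not dense.

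The main obstacle, and the step I expect to absorb the most work, is the construction of the injective intertwiner $Y\colon H^2_- \to \mathcal H_0$ --- equivalently, establishing $T_0 \buildrel d \over \prec S_*$ --- together with the reduction of the generic coupling $Y_1 T_2$ in $\widetilde T$ to the rank-one coupling $(\cdot, Y\overline\chi)\mathbf{1}$ in $\mathbf T$. Neither is automatic from the bare hypotheses: one must exploit the $C_{00}$ structure of $N$ (so that the Esterle-type estimate behind Lemma \ref{lemthetanormal} is available) and the residual freedom in the choice of $Y_1$ and of the intertwining map from $\widetilde T$ to $\mathbf T$, rather than relying on any single canonical construction.
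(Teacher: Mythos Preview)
Your overall architecture (the case split on whether $T_0\in C_{\cdot 0}$, the reduction via $T\buildrel d\over\prec \widetilde T$, and the combination of Lemma~\ref{lemthetanormal} with Proposition~\ref{prop34}) matches the paper's. But the two steps you flag as ``the main obstacle'' are not merely laborious; as stated they are genuine gaps, and the paper closes them with an idea you have not used: the isometric asymptote of the intermediate operator $\widetilde T$.

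First, the injective intertwiner $Y\colon H^2_-\to\mathcal H_0$ with $Y(S_*)^*=T_0^*Y$ need not exist. This amounts to $T_0\buildrel d\over\prec S_*$, equivalently $S\buildrel i\over\prec T_0^*$. Take $T_0^*=N$ normal with scalar spectral measure supported on a Blaschke sequence in $\mathbb D$: then $N$ is a subnormal $C_{00}$-contraction, but it is a $C_0$-contraction, and any $Z$ with $ZS=NZ$ has $\ker Z\supset \varphi H^2\neq\{0\}$ for the minimal function $\varphi$. So ``exploiting the $C_{00}$ structure of $N$'' cannot by itself produce $Y$. The paper avoids this by looking at the isometric asymptote $V$ of $\widetilde T$ (your $R$): if $V$ is not unitary, Lemma~\ref{lemshift} gives $\widetilde T\buildrel d\over\prec S$, so any inner $\theta$ already has $\operatorname{ran}\theta(\widetilde T)$ non-dense, and Lemma~\ref{lem13} finishes. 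The $C_0$ examples above all fall into this branch.

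Second, even when a suitable $Y$ exists, you have not explained how to pass from $\widetilde T$ to the specific model $\mathbf T$ built from $X_0=Y^*$. Writing the sought intertwiner as $\begin{pmatrix} I & W_2\\ \mathbb O & I\end{pmatrix}$ reduces to solving $SW_2-W_2T_0 = Y_1T_2 - (\cdot,Y\overline\chi)\mathbf 1$, and there is no general reason this Sylvester equation is solvable for an arbitrary choice of $Y$. In the paper this is free: when $V$ \emph{is} unitary, \cite[Theorem~3]{k89} forces $V\cong U_{\mathbb T}$, and the canonical intertwining mapping $X$ from $\widetilde T$ to $U_{\mathbb T}$ has block form $\begin{pmatrix} I_{H^2} & X_2\\ \mathbb O & X_0\end{pmatrix}$ with $\operatorname{clos}X_0\mathcal H_0=H^2_-$. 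The same $X_2$ then yields an honest similarity $\widetilde T\approx\mathbf T$ via $\begin{pmatrix} I_{H^2} & X_2\\ \mathbb O & I_{\mathcal H_0}\end{pmatrix}$, so one gets both $X_0$ and the link to $\mathbf T$ simultaneously. In short, the missing ingredient is the dichotomy on the isometric asymptote of $\widetilde T$, which manufactures $X_0$ and the similarity when the asymptote is unitary and provides the fallback $\widetilde T\buildrel d\over\prec S$ when it is not.
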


\begin{proof}  If $T_0$ is not of class $C_{\cdot 0}$, then $T$ has a nontrivial hyperinvariant subspace by Proposition \ref{prop1}. 
Thus, it sufficient to consider the case when  $T_0$ is of class $C_{\cdot 0}$. 
Set $A=T_0^*$. 
Then  $A$ satisfies the assumption of  Proposition \ref{propaann}, so $A$ is a subnormal contraction  of class $C_{00}$. 
Consequently, $T_0$ is   of class $C_{00}$. 

Denote by $\mathcal H_1$ and $\mathcal H_0$ the spaces on which $T_1$ and $T_0$ act, respectively, and by $X_1$ 
the transformation which realizes the relation $T_1\buildrel d \over \prec S$. 
For every (analytic) polynomial $p$ set $T_{2,p}=P_{\mathcal H_1}p(T)|_{\mathcal H_0}$.   Set 
\begin{equation}\label{rr} R=\begin{pmatrix} S & X_1T_2 \\ \mathbb O & T_0\end{pmatrix}.\end{equation}
Then $X_1\oplus I_{\mathcal H_0}$ realizes the relation $T\buildrel d \over \prec R$. For every (analytic) polynomial $p$ the equality 
\begin{equation*} p(R)=\begin{pmatrix} p(S) & X_1T_{2,p} \\ \mathbb O & p(T_0)\end{pmatrix}\end{equation*}
is an easy consequence of matrix representations of $p(T)$, $p(R)$, and the form $X_1\oplus I_{\mathcal H_0}$ of 
intertwining transformation. Consequently, $R$ is polynomially bounded. Furthermore, 
 $R$ is absolutely continuous, because $S$ and $T_0$ are 
absolutely continuous (for detailed explanation see {\cite[Lemma 2.2]{g18}}). 
By Lemma \ref{lem13}, it is sufficient to show that there exists a singular inner function $\theta$ such that 
 $\operatorname{ran} \theta(R)$ is not  dense.

If the isometric asymptote of $R$ is not unitary, 
then $R\buildrel d \over \prec S$ by Lemma \ref{lemshift}. Since $\operatorname{ran} \theta(S)$ is not  dense
for every inner function $\theta$, the conclusion of the theorem follows from Lemma \ref{lem13}. 

Now consider the case when  the isometric asymptote $V$ of $R$ is  unitary. By {\cite[Theorem 3]{k89}}, 
 $V\cong U_{\mathbb T}$, because $T_0$ is of class $C_{0\cdot}$. We may assume that $V=U_{\mathbb T}$. Then the canonical intertwining mapping $X$ has the form 
\begin{equation*} X=\begin{pmatrix} I_{H^2} & X_2 \\ \mathbb O & X_0\end{pmatrix}\end{equation*}
with respect to the decompositions \eqref{rr} and \eqref{uu}, where $X_2$ and $X_0$ are appropiate transformations,
and $X_0T_0=S_*X_0$.
The relation 
\begin{equation*} \operatorname{clos}X(H^2\oplus\mathcal H_0)=L^2\end{equation*} 
implies $\operatorname{clos}X_0\mathcal H_0=H^2_-$. 
Let $\mathbf{T}$ be defined as in Proposition \ref{prop34}. 
The relation 
\begin{equation*} 
\begin{pmatrix} I_{H^2} & X_2\\ \mathbb O & I_{\mathcal H_0}\end{pmatrix}R
=\mathbf{T}\begin{pmatrix} I_{H^2} & X_2\\ \mathbb O & I_{\mathcal H_0}\end{pmatrix}
\end{equation*}
means that $R\approx\mathbf{T}$. 

 Denote by $N$ the minimal normal extension of $A$. By Proposition \ref{propaann}, $N$ is of class $C_{00}$. 
 Let a singular inner function  $\theta$ and $x_0\in\mathcal H_0$ be obtained in Lemma \ref{lemthetanormal} applied to 
 $N$, $\mathcal H_0$, and $Y=X_0^*$. 
Note that $T_0^*=A=N|_{\mathcal H_0}$. Therefore, 
\begin{equation*} 
\theta(T_0)^*=\widetilde\theta(T_0^*)=\widetilde\theta(N|_{\mathcal H_0}).
\end{equation*}
Thus, $x_0\notin X_0^* H^2_-$ and
$\theta(T_0)^* x_0 \in  X_0^* H^2_-$. By  Proposition \ref{prop34}, $\ker \theta(\mathbf{T})^*\neq\{0\}$. 
Consequently, $\operatorname{ran} \theta(\mathbf{T})$ is not dense. Since $R\approx\mathbf{T}$,  
$\operatorname{ran} \theta(R)$ is not  dense, too.
\end{proof}

\begin{corollary}\label{coruu} Suppose that in  the assumptions of Theorem \ref{thmmain1} $T_0=T_{00}\oplus U$, where $T_{00}$ 
 is of class $C_{\cdot 0}$ and $U$ is unitary. Then there exists a singular inner function $\theta$ such that 
 $\operatorname{ran} \theta(T)$ is not  dense.
\end{corollary}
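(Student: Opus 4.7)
The plan is to apply the ``Moreover'' part of Theorem~\ref{thmmain1} to the restriction $\tilde T := T|_{\mathcal H_1 \oplus \mathcal H_{00}}$ and transfer the conclusion back to $T$ via a direct comparison of kernels $\ker\theta(T)^* = \ker\widetilde\theta(T^*)$ and $\ker\theta(\tilde T)^*$.

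First I would check that $\mathcal H_1 \oplus \mathcal H_{00}$ is invariant for $T$: relative to $\mathcal H_1 \oplus \mathcal H_{00} \oplus \mathcal H_U$, $T$ is block upper triangular and its only possibly nonzero entries are $T_1, T_{00}, U$ on the diagonal together with $T_2|_{\mathcal H_{00}}$, $T_2|_{\mathcal H_U}$ in the first row, so $\mathcal H_1 \oplus \mathcal H_{00}$ is $T$-invariant and $\tilde T$ is a.c.\ polynomially bounded of the form $\bigl(\begin{smallmatrix} T_1 & * \\ 0 & T_{00} \end{smallmatrix}\bigr)$. Since $T_{00}^*$ is subnormal as a direct summand of the subnormal $T_0^* = T_{00}^* \oplus U^*$ and $T_{00}$ is of class $C_{\cdot 0}$ by hypothesis, $\tilde T$ satisfies the hypotheses of the ``Moreover'' part of Theorem~\ref{thmmain1}, yielding a singular inner $\theta$ with $\ker\widetilde\theta(\tilde T^*)\neq \{0\}$.

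Next I would transfer the conclusion to $T$. With respect to $\mathcal H_1 \oplus \mathcal H_{00} \oplus \mathcal H_U$, $T^*$ is block lower triangular with zeros in the $(1,2), (1,3)$ and $(2,3)$ positions and diagonal $(T_1^*, T_{00}^*, U^*)$. These zero positions are preserved under taking powers and hence, by continuity of the $H^\infty$-calculus on a.c.\ polynomially bounded operators, under $\widetilde\theta(\cdot)$. Moreover, writing $T^*$ in the form $\bigl(\begin{smallmatrix} \tilde T^* & 0 \\ * & U^* \end{smallmatrix}\bigr)$ with $\tilde T^*$ in the upper-left, the upper-left $2\times 2$ block of every power of $T^*$ is the corresponding power of $\tilde T^*$, so the upper-left $2\times 2$ block of $\widetilde\theta(T^*)$ coincides with $\widetilde\theta(\tilde T^*)$.

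Consequently the equation $\widetilde\theta(T^*)v = 0$ for $v = (v_1, v_{00}, v_U)$ decouples into
\begin{enumerate}
\item[(i)] $(v_1, v_{00}) \in \ker \widetilde\theta(\tilde T^*)$;
\item[(ii)] $L_{31} v_1 + \widetilde\theta(U^*) v_U = 0$,
\end{enumerate}
where $L_{31}$ denotes the $(3,1)$ block of $\widetilde\theta(T^*)$. Because $U$ is unitary and $|\widetilde\theta| = 1$ a.e.\ on $\mathbb T$, $\widetilde\theta(U^*)$ is unitary, so (ii) uniquely determines $v_U$ from $v_1$. This gives a linear bijection between $\ker\widetilde\theta(T^*)$ and $\ker\widetilde\theta(\tilde T^*)$; the two kernels are simultaneously trivial, and the previous step now shows $\ker\widetilde\theta(T^*)\neq \{0\}$, i.e.\ $\operatorname{ran}\theta(T)$ is not dense.

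The main point to verify carefully is the block-calculus claim that the upper-left $2\times 2$ block of $\widetilde\theta(T^*)$ equals $\widetilde\theta(\tilde T^*)$ and that the $(3,2)$ block of $\widetilde\theta(T^*)$ vanishes; both reduce to stability of the relevant block entries under polynomial powers together with continuity of the $H^\infty$-functional calculus on the a.c.\ polynomially bounded operators $T^*$ and $\tilde T^*$. The invertibility of $\widetilde\theta(U^*)$ then does all the work of ``absorbing'' the unitary summand $U$, making the existence of the singular inner $\theta$ for $T$ equivalent to its existence for $\tilde T$, for which Theorem~\ref{thmmain1} directly applies.
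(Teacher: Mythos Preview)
Your argument is correct and takes a more elementary route than the paper's. Both proofs restrict to $\tilde T=T|_{\mathcal H_1\oplus\mathcal H_{00}}$ and apply Theorem~\ref{thmmain1} to obtain a singular inner $\theta$ with $\ker\theta(\tilde T)^*\neq\{0\}$. To transfer this to $T$, the paper invokes a splitting result of K\'erchy \cite{k89b}: since $\mathcal G\in\operatorname{Lat}T^*$ with $T^*|_{\mathcal G}=U^*$ unitary, there exists $\mathcal K\in\operatorname{Lat}T^*$ with $\mathcal H=\mathcal G\dotplus\mathcal K$, giving a (non-orthogonal) decomposition $T=\tilde T\dotplus T|_{\mathcal K^\perp}$ with $T|_{\mathcal K^\perp}\approx U$; non-density of $\operatorname{ran}\theta(\tilde T)$ then passes directly to $\operatorname{ran}\theta(T)$. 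You instead compute the block structure of $\widetilde\theta(T^*)$ by hand: the vanishing of the $(3,2)$ entry persists under powers and hence under the $H^\infty$-calculus, so the third row of the system $\widetilde\theta(T^*)v=0$ decouples from $v_{00}$, and invertibility of $\widetilde\theta(U^*)$ lets one solve uniquely for $v_U$ in terms of $v_1$. Your approach avoids the external reference \cite{k89b} and is self-contained; the paper's yields an actual similarity splitting of $T$, which is of independent interest. One point worth making explicit in your write-up: $\widetilde\theta(U^*)$ is unitary because $U$, as a direct summand of the compression $T_0$ of the a.c.\ polynomially bounded operator $T$, is itself absolutely continuous, so that $|\widetilde\theta|=1$ holds a.e.\ with respect to its spectral measure.
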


\begin{proof} Denote by $\mathcal H$, $\mathcal H_1$, $\mathcal H_{00}$, $\mathcal G$ 
the spaces in which $T$, $T_1$, $T_{00}$, $U$ act, respectively. Then $T$ has the form 
\begin{equation*}T=\begin{pmatrix}T_1 & T_{20} & T_{21} \\ \mathbb O & T_{00} & \mathbb O \\ 
\mathbb O &  \mathbb O & U\end{pmatrix}
\end{equation*}
with respect to the decomposition $\mathcal H=\mathcal H_1\oplus\mathcal H_{00}\oplus\mathcal G$ (where $T_{20}$ and $T_{21}$ are appropriate transformations). 
Set  \begin{equation*}R=T|_{\mathcal H_1\oplus\mathcal H_{00}}=\begin{pmatrix}T_1 & T_{20}  \\ \mathbb O & T_{00}\end{pmatrix}.
\end{equation*}
Then $R$ satisfies the assumptions of Theorem \ref{thmmain1}, because $T_{00}$ is of class $C_{\cdot 0}$ and  $T_{00}^*$ is subnormal. 
Furthermore, $\mathcal G\in\operatorname{Lat}T^*$ and $T^*|_{\mathcal G}=U^*$. Since $U$ is unitary and $T$ is power bounded, 
by \cite{k89b} there exists $\mathcal K\in \operatorname{Lat}T^*$ such that $\mathcal H=\mathcal G\dotplus\mathcal K$. 
(Although the proposition in \cite{k89b} is formulated
for contractions, application of results from \cite{k89} allows to repeat the
proof for power bounded operators.) Consequently, $\mathcal H=\mathcal G^\perp\dotplus\mathcal K^\perp$, and
$\mathcal K^\perp\in\operatorname{Lat}T$. Since  $\mathcal G^\perp=\mathcal H_1\oplus\mathcal H_{00}$, we have
\begin{equation*} 
T=T|_{\mathcal G^\perp}\dotplus T|_{\mathcal K^\perp}=R\dotplus T|_{\mathcal K^\perp}.
\end{equation*}
Therefore,
 $\operatorname{ran} \theta(T)$ is not  dense for every  function $\theta$ such that 
 $\operatorname{ran} \theta(R)$ is not  dense, and such a singular inner function $\theta$
 exists by Theorem \ref{thmmain1} applied to $R$. (Note that 
$T|_{\mathcal K^\perp}\approx U$, where the similarity is realized by $P_{\mathcal G}|_{\mathcal K^\perp}$. Therefore, 
$\theta(T|_{\mathcal K^\perp})\approx\theta(U)$, and, consequently, $\theta(T|_{\mathcal K^\perp})$  is invertible for every inner function $\theta$.) 
\end{proof}

\begin{corollary}\label{cormm} Suppose that in  the assumptions of Theorem \ref{thmmain1} 
there exists a singular inner function $\theta$ such that 
 $\operatorname{ran} \theta(T)$ is not  dense.
Denote by $\mathcal H_1$ the space in which $T_1$ acts, and let $\mathcal M\in\operatorname{Lat}T$ be such that 
$\mathcal H_1\subset\mathcal M$. Then   $\operatorname{clos} \theta(T)\mathcal M\neq\mathcal M$.
\end{corollary}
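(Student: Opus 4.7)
The plan is to argue by contradiction: assuming $\operatorname{clos}\theta(T)\mathcal{M}=\mathcal{M}$, I will deduce that $\ker\theta(T)^*=\{0\}$, contradicting the non-density of $\operatorname{ran}\theta(T)$.

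The first observation is immediate: under the contradictory hypothesis, every $y\in\ker\theta(T)^*$ is orthogonal to $\theta(T)\mathcal{M}$, which is dense in $\mathcal{M}$, so $y\in\mathcal{M}^\perp$. Because $\mathcal{H}_1\subset\mathcal{M}$, we have $\mathcal{M}^\perp\subset\mathcal{H}_0$, and since $\mathcal{M}^\perp\in\operatorname{Lat}T^*$ with $T^*|_{\mathcal{H}_0}=T_0^*$, the operator $A:=T^*|_{\mathcal{M}^\perp}=T_0^*|_{\mathcal{M}^\perp}$ is a restriction of the subnormal operator $T_0^*$ to an invariant subspace, hence itself subnormal. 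For $y\in\mathcal{M}^\perp$ the $H^\infty$-functional calculus gives $\theta(T)^*y=\widetilde\theta(T^*)y=\widetilde\theta(A)y$.

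The core step is to show $\widetilde\theta(A)$ is injective. Following the reduction in the proof of Theorem \ref{thmmain1}, I may assume $T_0$ is of class $C_{\cdot 0}$ (this is the relevant case, in which such a $\theta$ is constructed there). Then by Proposition \ref{propaann}, $T_0$ is of class $C_{00}$, and $A$ inherits class $C_{0\cdot}$ because $\|A^ny\|=\|T_0^{*n}y\|\to 0$ for $y\in\mathcal{M}^\perp$. Applying Proposition \ref{propaann} to $A$, its minimal normal extension $N$ is a normal contraction of class $C_{00}$, with spectral measure concentrated on $\mathbb{D}$. Since $\widetilde\theta$ is singular inner it has no zeros in $\mathbb{D}$, so $\widetilde\theta(N)$ is injective by the spectral theorem. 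Standard SOT-approximation of $\widetilde\theta$ by polynomials then identifies $\widetilde\theta(A)=\widetilde\theta(N)|_{\mathcal{M}^\perp}$, the restriction of an injective operator, and hence itself injective.

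Combining the steps, any $y\in\ker\theta(T)^*$ lies in $\mathcal{M}^\perp$ and satisfies $\widetilde\theta(A)y=0$; injectivity forces $y=0$, so $\ker\theta(T)^*=\{0\}$, contradicting the existence of a nonzero element there guaranteed by the assumed non-density of $\operatorname{ran}\theta(T)$. The main obstacle is the injectivity claim in the central step: it relies on the reduction to $T_0$ of class $C_{00}$ via Propositions \ref{prop1} and \ref{propaann}, and on the compatibility of the $H^\infty$-functional calculus on the subnormal $A$ with the spectral calculus of its minimal normal extension.
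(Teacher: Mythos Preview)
Your overall strategy---argue by contradiction and reduce to showing that $\widetilde\theta$ acts injectively on the subnormal compression $T^*|_{\mathcal M^\perp}$---is exactly the paper's approach. The paper phrases it dually (showing $\operatorname{clos}P_{\mathcal E}\theta(T_0)\mathcal E=\mathcal E$ for $\mathcal E=\mathcal M^\perp$ and then combining with $\operatorname{clos}\theta(T)\mathcal M=\mathcal M$ to get $\operatorname{clos}\theta(T)\mathcal H=\mathcal H$), but this is the same computation.

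There is, however, a genuine gap in your reduction step. You write ``I may assume $T_0$ is of class $C_{\cdot 0}$,'' but the corollary does \emph{not} grant this: its hypothesis is only that a singular inner $\theta$ with non-dense range exists, and the Remark immediately following the corollary explicitly says it applies to the operators of Corollary~\ref{coruu}, where $T_0=T_{00}\oplus U$ with $U$ unitary and hence $T_0$ is \emph{not} of class $C_{\cdot 0}$. Your route through Proposition~\ref{propaann} therefore does not cover the stated result.

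The paper avoids this by a more robust injectivity argument that needs no class assumption. Since $T_0^*$ is subnormal and power bounded, it is automatically a contraction (spectral radius equals norm), and it is a.c.\ because $T$ is; its minimal normal extension $N$ is then an a.c.\ normal contraction. Now $\widetilde\theta$ is singular inner, so $\widetilde\theta(z)\neq 0$ for every $z\in\mathbb D$ and for $m$-a.e.\ $z\in\mathbb T$; since the spectral measure of $N$ lives on $\overline{\mathbb D}$ and its $\mathbb T$-part is absolutely continuous with respect to $m$, this gives $\ker\widetilde\theta(N)=\{0\}$, hence $\ker\widetilde\theta(T_0^*)=\{0\}$ and $\ker\widetilde\theta(T_0^*|_{\mathcal M^\perp})=\{0\}$. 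Replacing your invocation of Proposition~\ref{propaann} by this argument repairs the proof and makes it coincide with the paper's.
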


\begin{proof} Denote by $\mathcal H$ the space in which $T$ acts, set $\mathcal E=\mathcal M^\perp=\mathcal H\ominus\mathcal M$ and $A=T_0^*$. 
By assumption, $A$ is subnormal, and $A$ is an a.c. contraction. Denote by $N$ the minimal normal extension of $A$. 
Then $N$ is an a.c. contraction. Since $\widetilde\theta$ is a singular inner function, 
we have $\widetilde\theta(z)\neq 0$ for every $z\in\mathbb D$ and for $m$-a.e. $z\in\mathbb T$. 
Consequently, $\ker\widetilde\theta(N)=\{0\}$. Therefore, $\ker\widetilde\theta(A)=\{0\}$. Furthermore, 
$\mathcal E\in\operatorname{Lat}A$. 
Therefore, $\ker\widetilde\theta(A|_{\mathcal E})=\{0\}$, too. 
Thus, $\operatorname{clos} P_{\mathcal E}\theta(T_0)\mathcal E=\mathcal E$. If 
 $\operatorname{clos} \theta(T)\mathcal M=\mathcal M$, 
then  $\operatorname{clos} \theta(T)\mathcal H=\mathcal H$, a contradiction with the assumption on $\theta$.
\end{proof}

\begin{remark} Corollary \ref{cormm} can be applied to operators from Corollary \ref{coruu}.
\end{remark}

\begin{remark}\label{rem1} In {\cite[Proposition 2.7]{kimpearcy}} the existence of a nontrivial hyperinvariant subspace 
for an operator of the form 
\begin{equation}\label{ttnn} 
 T=\begin{pmatrix} B & * \\ \mathbb O & N\end{pmatrix},
\end{equation}
where $B=S$ and $N$ is normal, is proved, without the assumption that $T$ is polynomially or power bounded, 
but under some assumptions on $N$,
 especially that $N$ is not cyclic.  
By 
 {\cite[Lemma IV.2.1]{ff}}, for every contractions $T_0$ and $T_1$ such that $T_0$ and $T_1^*$ are not isometries there exists 
a nonzero transformation $T_2$ such that $T$ defined by \eqref{tt120} is a contraction.
Let $\mu$ be a positive Borel measure on $\mathbb D\cup\mathbb T$ such that $\mu(\mathbb D)>0$ and 
$\mu|_{\mathbb T}$ is absolutely continuous with respect to $m$. Let $N_\mu$ be an operator of multiplication by $\chi$ on $L^2(\mu)$. Then $N_\mu$ is a cyclic normal operator, and $N_\mu$ is an a.c. contraction. 
 Applying {\cite[Lemma IV.2.1]{ff}} to $T_1=S$ 
and $T_0=N_\mu$,
 examples of contractions of the form \eqref{ttnn} with $B=S$ can be obtained which do
not satisfy the assumption of  {\cite[Proposition 2.7]{kimpearcy}} and satisfy the assumptions of 
Corollary  \ref{coruu}. 
Using {\cite[Proposition 3.1]{g19}}
 and the fact that $S\prec R$ for any cyclic a.c. contraction $R$ 
which is not a $C_0$-contraction (see {\cite[Introduction]{tak}} and references therein) and taking a positive Borel measure $\mu$
 on $\mathbb D$ without atoms, 
 it is easy to construct an operator $T$  which is similar to   a.c. contraction of class $C_{10}$ 
with the isometric asymptote $U_{\mathbb T}$ such that  $T$ satisfies  the assumptions of 
Theorem \ref{thmmain1} with $T_0=N_\mu$, where $N_\mu$  is a cyclic normal contraction of class $C_{00}$. For more details, see the next subsection.  

In {\cite[Corollary 3.7]{jungkopearcy}} the existence of a nontrivial hyperinvariant subspace 
for an operator of the form \eqref{ttnn} is proved under the  assumption that $N$ is normal and there exists a nonzero transformation $X$
 such that $XN=BX$. If $B$ is pure subnormal, in particular if $B=S$, and $XN=BX$,  then $X=\mathbb O$. 
 Indeed, the equality $XN=BX$ with a nonzero $X$ implies 
\begin{equation*}(N^*|_{(\ker X)^\perp})^*=P_{(\ker X)^\perp} N|_{(\ker X)^\perp}\prec
 B|_{\operatorname{clos}\operatorname{ran}X},\end{equation*}
and $N^*|_{(\ker X)^\perp}$ is subnormal. A contradiction is obtained by  {\cite[Proposition II.10.6]{conwaysubnormal}}. 
\end{remark}

\subsection{Examples and additional propositions}

For $\nu\in\mathbb N\cup\{\infty\}$ denote by  $H^2_\nu$, $L^2_\nu$, $(H^2_-)_\nu$   the orthogonal sum of $\nu$ copies of $H^2$, $L^2$, $H^2_-$, respectively.
 By $P_+$ 
 the orthogonal projection from $L^2_\nu$ onto $H^2_\nu$ 
is denoted (it depends on $\nu$, but it will not be mentioned in notation). 
By $S_\nu$, $S_{*, \nu}$,  and $U_{\mathbb T, \nu}$
 the orthogonal sum of $\nu$ copies of $S$, $S_*$, and $U_{\mathbb T}$ are denoted,  respectively. 
Set $K_\nu=P_+ U_{\mathbb T,\nu}|_{(H^2_-)_\nu}$. Then 
\begin{equation} \label{uunu} U_{\mathbb T,\nu}=\begin{pmatrix} S_\nu & K_\nu  \\ \mathbb O & S_{*,\nu}\end{pmatrix}
\end{equation}
with respect to the decomposition $L^2_\nu=H^2_\nu\oplus (H^2_-)_\nu$. 
The following proposition is a simple generalization of   {\cite[Proposition 3.1]{g19}}
and allows to find the isometric asymptote of the constructed operator.

\begin{proposition}\label{prop31}  Suppose  that $\nu\in\mathbb N\cup\{\infty\}$, $T_0\in\mathcal L(\mathcal H_0)$ 
is a contraction of class $C_{00}$, 
$X_0\in\mathcal L(\mathcal H_0, (H^2_-)_\nu)$,  and $X_0 T_0=S_{*,\nu} X_0$. 
Set
\begin{equation*} T=\begin{pmatrix} S_\nu & K_\nu X_0 \\ \mathbb O & T_0\end{pmatrix}. 
\end{equation*}
Then $T$ is similar to a contraction of class $C_{\cdot 0}$, and 
$((I_{H^2_\nu}\oplus X_0), U_{\mathbb T, \nu})$ is the unitary asymptote of $T$. 
Consequently, $T$ is of class $C_{10}$ if and only if $\ker X_0=\{0\}$, 
and  $U_{\mathbb T,\nu}$ is the isometric asymptote of $T$ if and only if 
 $\operatorname{clos}X_0\mathcal H_0=(H^2_-)_\nu$. 
\end{proposition}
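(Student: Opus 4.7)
The proof will follow the strategy of \cite[Proposition 3.1]{g19}, which handles the case $\nu=1$; since only the underlying multiplicities change, the main work is to verify that the same computations go through with $S$, $K$, $H^2_-$ replaced by $S_\nu$, $K_\nu$, $(H^2_-)_\nu$. First I would set $W=I_{H^2_\nu}\oplus X_0$ and check, by direct block multiplication using \eqref{uunu} and the hypothesis $X_0T_0=S_{*,\nu}X_0$, that $WT=U_{\mathbb T,\nu}W$. Unitarity of $U_{\mathbb T,\nu}$ then forces $\|WT^nx\|^2=\|Wx\|^2$ for every $n$ and every $x=h\oplus k\in H^2_\nu\oplus\mathcal H_0$; the $(H^2_-)_\nu$-component of $WT^nx$ is $X_0T_0^nk$, which tends to $0$ because $T_0\in C_{00}$, so its $H^2_\nu$-component $\beta_n$ satisfies $\|\beta_n\|^2\to\|Wx\|^2$. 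Adding $\|T_0^nk\|^2\to 0$ yields the key asymptotic identity
\[
\lim_{n\to\infty}\|T^nx\|^2=\|Wx\|^2.
\]
In particular $T$ is power bounded and $\mathcal H_{T,0}=\ker W=\{0\}\oplus\ker X_0$.

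By the universal property of the isometric asymptote \cite[Theorem 1]{k89}, this asymptotic identity together with $WT=U_{\mathbb T,\nu}W$ pins down $W$ (up to unique identification) as the canonical intertwining mapping, and the isometric asymptote of $T$ as $V:=U_{\mathbb T,\nu}|_{\overline{W(H^2_\nu\oplus\mathcal H_0)}}$, where the closure equals $H^2_\nu\oplus\overline{X_0\mathcal H_0}$. Since $\bigvee_n U_{\mathbb T,\nu}^{*n}H^2_\nu=L^2_\nu$ and $H^2_\nu\subset\overline{W(H^2_\nu\oplus\mathcal H_0)}$, the minimal unitary extension of $V$ is $U_{\mathbb T,\nu}$ itself, which shows that $(W,U_{\mathbb T,\nu})$ is the unitary asymptote of $T$. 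Moreover $V=U_{\mathbb T,\nu}$ precisely when $\overline{X_0\mathcal H_0}=(H^2_-)_\nu$, giving one of the two equivalences.

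The main obstacle I expect is establishing similarity of $T$ to a contraction of class $C_{\cdot 0}$. A natural first attempt is the equivalent norm $\|h\oplus k\|_*^2=\|h\|^2+\|X_0k\|^2+\|k\|^2$, in which the unitarity of $U_{\mathbb T,\nu}$ acting on $(h,X_0k)$ together with contractivity of $T_0$ immediately yields
\[
\|Tx\|_*^2=\|Wx\|^2+\|T_0k\|^2\leq\|x\|_*^2,
\]
so $T$ is a contraction in $\|\cdot\|_*$. Verifying the $C_{\cdot 0}$ property in this (or an appropriately modified) norm requires a careful analysis of the new adjoint: the off-diagonal $K_\nu X_0$ combined with the off-diagonal of $U_{\mathbb T,\nu}^{*n}$ produces bounded but not obviously vanishing contributions from $X_0^*$, and controlling them uses the $C_{00}$-hypothesis on $T_0$ in an essential way. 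I expect the explicit construction used for $\nu=1$ in \cite[Proposition 3.1]{g19} to adapt verbatim, since nothing in that step sees the multiplicity.

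Finally, since class $C_{\cdot 0}$ is preserved under similarity (as $T^{*n}=Z^{-*}T'^{*n}Z^*\to 0$ strongly whenever $T'\in C_{\cdot 0}$), the previous step gives $T\in C_{\cdot 0}$; hence $T$ is of class $C_{10}$ iff $T$ is of class $C_{1\cdot}$, iff $\mathcal H_{T,0}=\{0\}$, iff $\ker X_0=\{0\}$, which is the remaining equivalence.
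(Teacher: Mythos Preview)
Your treatment of the intertwining relation $WT=U_{\mathbb T,\nu}W$, the asymptotic identity $\lim_n\|T^nx\|^2=\|Wx\|^2$, and the identification of the isometric and unitary asymptotes is correct and is precisely what the paper means when it writes ``proved as in the proof of \cite[Proposition 3.1]{g19}''. The two consequences about $\ker X_0$ and $\operatorname{clos}X_0\mathcal H_0$ follow exactly as you say.

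Where your route differs from the paper is in the remaining two claims. For the $C_{\cdot 0}$ property you flag a serious obstacle and defer to an adaptation of the $\nu=1$ argument, but the paper dispatches it in one line: from the intertwining relation one reads off the block form of $p(T)$ (namely $p(S_\nu)$, $P_+p(U_{\mathbb T,\nu})|_{(H^2_-)_\nu}X_0$, $p(T_0)$), which shows $T$ is polynomially bounded, and then \cite[Theorem 3]{k89} gives $T\in C_{\cdot 0}$ directly from the fact that both diagonal blocks $S_\nu$ and $T_0$ are of class $C_{\cdot 0}$. No analysis of adjoints in a modified norm is needed. Conversely, for similarity to a contraction the paper simply invokes \cite[Corollary 4.2]{cassier}, while your explicit equivalent norm $\|h\oplus k\|_*^2=\|Wx\|^2+\|k\|^2$ together with the computation $\|Tx\|_*^2=\|Wx\|^2+\|T_0k\|^2\le\|x\|_*^2$ is a correct, self-contained, and more elementary substitute. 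Since membership in $C_{\cdot 0}$ is similarity-invariant, once $T\in C_{\cdot 0}$ is known via K\'erchy's theorem the contraction you construct is automatically of class $C_{\cdot 0}$ as well; so the two claims decouple and what you called the ``main obstacle'' disappears.
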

\begin{proof}
It is easy to see from the definition of $T$ and \eqref{uunu} that 
 \begin{equation*} (I_{H^2_\nu}\oplus X_0)T=U_{\mathbb T, \nu}(I_{H^2_\nu}\oplus X_0). \end{equation*} 
Therefore, for every (analytic) polynomial $p$ we have  
\begin{equation*} (I_{H^2_\nu}\oplus X_0)p(T)=p(U_{\mathbb T, \nu})(I_{H^2_\nu}\oplus X_0).  \end{equation*} 
This implies \begin{equation*} 
p(T)=\begin{pmatrix} p(S_\nu) & P_+p(U_{\mathbb T, \nu})|_{(H^2_-)_\nu} X_0 \\ \mathbb O & p(T_0)\end{pmatrix}. 
\end{equation*}
Consequently, $T$ is polynomialy bounded. Since $S_\nu$ and $T_0$ are of class $C_{\cdot 0}$,
 it follows from {\cite[Theorem 3]{k89}} that $T$ is of class $C_{\cdot 0}$, too. 
The statements on unitary and isometric asymptotes are proved as in the proof of  {\cite[Proposition 3.1]{g19}}. 

The conclusion on similarity to a contraction follows from {\cite[Corollary 4.2]{cassier}}.
\end{proof}

 \begin{example} Let $\nu\in\mathbb N\cup\{\infty\}$, and let $\mu$ be  a positive  Borel  measure on $\mathbb D$ without atoms. 
Then there exist disjoint Borel sets $\{\Delta_n\}_{n=1}^\nu$ such that 
$\mathbb D=\cup_{n=1}^\nu\Delta_n$ and $\mu(\Delta_n)>0$ for every $n$. 
Let $N_{\mu|\Delta_n}$ be the operator 
of multiplication by the independent variable on $L^2(\Delta_n,\mu)$. Then $N_{\mu|\Delta_n}$ is a contraction 
of class $C_{00}$, and  $N_{\mu|\Delta_n}$ is not of class $C_0$, because $\mu$ has no atoms. 
By {\cite[Theorem V.14.21]{conwaysubnormal}},  $N_{\mu|\Delta_n}$ is cyclic. Consequently, $S\prec  N_{\mu|\Delta_n}$ 
for every $n$  (see {\cite[Introduction]{tak}} and references therein).

Let $N_\mu$ be the operator 
of multiplication by the independent variable on $L^2(\mu)$. Then  
\begin{equation*}N_\mu\cong \oplus_{n=1}^\nu N_{\mu|\Delta_n},
\end{equation*}
because $\Delta_n$ are disjoint. Thus, $S_\nu\prec N_\mu$. Consequently, $N_\mu^*\prec S_{*,\nu}$. 
Denote by $X_0$ a transformation which realizes the last relation and apply Proposition \ref{prop31} with $\nu$ 
and $T_0=N_\mu^*$. 
The obtained operator $T$ is of class $C_{10}$, and its isometric asymptote is $U_{\mathbb T, \nu}$, because $X_0$ is a quasiaffinity. 
  $T$ satisfies the assumption of Theorem \ref{thmmain1}. Note that  $N_\mu^*$ is cyclic by {\cite[Theorem V.14.21]{conwaysubnormal}}. 
\end{example}

\begin{example} Let $A$ be a subnormal contraction of class  $C_{00}$, $\sigma_p(A)=\emptyset$, and $\dim\ker A^*=\infty$. 
Examples of such subnormal operators are restrictions of Bergman shifts on appropiate invariant subspaces, see \cite{hrs}  and 
\cite{bhv}. 
By {\cite[Theorem 1]{tak}}, $S_\infty\prec A$. Set $T_0=A^*$. Then $T_0\prec S_{*,\infty}$. 
  Denote by $X_0$ a transformation which realizes the last relation and apply Proposition \ref{prop31} with $\nu=\infty$ and $T_0$. 
The obtained operator $T$ is of class $C_{10}$, and its isometric asymptote is $U_{\mathbb T,\infty}$, 
because $X_0$ is a quasiaffinity.  $T$ satisfies the assumption of Theorem \ref{thmmain1}.
\end{example}

\begin{example} Let  $\nu\in\mathbb N$, and let $A$ be a subnormal contraction of class  $C_{00}$
 with $\sigma_p(A)=\emptyset$. By {\cite[Theorem 2]{tak}}, 
$S_\nu\buildrel i\over\prec A$. 
Denote by $Y$ the transformation which realizes the last relation.  
Set $T_0=(A|_{\operatorname{clos}YH^2_\nu})^*$. Then $T_0\prec S_{*,\nu}$. 
As in two previous examples, apply Proposition \ref{prop31} with $\nu$ and $T_0$. 
The obtained operator $T$ is of class $C_{10}$,  its isometric asymptote is $U_{\mathbb T,\nu}$, and 
 $T$ satisfies the assumption of Theorem \ref{thmmain1}.
\end{example}

The following propositions give some examples of operators $T$ such that $T\buildrel d \over \prec S$. 
They have the form 
\begin{equation}\label{tt100}T=\begin{pmatrix}T_1 & * \\ \mathbb O & T_{00}\end{pmatrix} ,
\end{equation}
where $T_1 \buildrel d \over \prec S_\nu$ for some $\nu\in\mathbb N\cup\{\infty\}$ and $T_{00}\in C_{0\cdot}$.

\begin{proposition} \label{propnu0}Suppose that $\nu_0\in\mathbb N$, $\nu_1\in\mathbb N\cup\{\infty\}$, $\nu_0<\nu_1$, 
and a power bounded operator $T$ has the form \eqref{tt100}, where $T_1 \buildrel d \over \prec S_{\nu_1}$ and 
$S_{*,\nu_0}\ \buildrel d \over \prec T_{00}$. 
 Then $T\buildrel d \over \prec S$. 
\end{proposition}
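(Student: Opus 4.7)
The plan is to invoke Lemma \ref{lemshift}: the conclusion $T\buildrel d\over\prec S$ is equivalent to showing that the isometric asymptote $V_T$ of $T$ is not unitary, i.e., its Wold decomposition contains a nontrivial shift summand. I would first reduce to studying $R=\begin{pmatrix}S_{\nu_1}&X_1T_{12}\\\mathbb O&T_{00}\end{pmatrix}$ on $H^2_{\nu_1}\oplus\mathcal H_{00}$: the map $X_1\oplus I_{\mathcal H_{00}}$ realizes $T\buildrel d\over\prec R$, and the universality of the isometric asymptote (as in the proof of Lemma \ref{lemshift}) yields a dense-range intertwining $V_T\buildrel d\over\prec V_R$, so an injection $\ker V_R^*\hookrightarrow\ker V_T^*$. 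Thus the shift multiplicity of $V_T$ is at least that of $V_R$, and it suffices to prove $R\buildrel d\over\prec S$.

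To construct a dense-range intertwiner $\Phi\colon H^2_{\nu_1}\oplus\mathcal H_{00}\to H^2$ with $\Phi R=S\Phi$, I would write $\Phi(h,y)=\pi(h)+\psi(y)$, where $\pi\colon H^2_{\nu_1}\to H^2$ is of the form $\pi(h_1,\dots,h_{\nu_1})=\sum_i\phi_ih_i$ with $\phi_i\in H^\infty$ (so $\pi$ automatically intertwines $S_{\nu_1}$ with $S$), while $\psi\colon\mathcal H_{00}\to H^2$ must satisfy the Sylvester equation $\psi T_{00}-S\psi=-\pi X_1T_{12}$. Using the given $Y\colon(H^2_-)_{\nu_0}\to\mathcal H_{00}$ with $YS_{*,\nu_0}=T_{00}Y$, precomposition turns this into
\begin{equation*}
\widetilde\psi S_{*,\nu_0}-S\widetilde\psi=-\pi X_1T_{12}Y,\qquad\widetilde\psi:=\psi Y,
\end{equation*}
a Hankel-type Sylvester relation between the backward shift $S_{*,\nu_0}$ and the forward shift $S$. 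The hypothesis $\nu_0<\nu_1$ is crucial here: as $\pi$ varies over $(H^\infty)^{\nu_1}$, the right-hand side spans a family of maps $(H^2_-)_{\nu_0}\to H^2$ whose "effective dimension" exceeds the obstruction to solvability coming from the Beurling--Lax--Halmos multiplicity of the smallest $S_{\nu_1}$-invariant subspace $\mathcal N\subset H^2_{\nu_1}$ containing the range of $X_1T_{12}Y$, which is at most $\nu_0$; hence a valid pair $(\pi,\widetilde\psi)$ with dense-range $\pi$ can in principle be produced.

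The main obstacle is formalizing the last step rigorously: one must solve the Sylvester equation for $\widetilde\psi$ with a bounded solution that descends (via the nonsurjective $Y$) to a bounded $\psi$ on $\mathcal H_{00}$. I would approach this by computing the isometric asymptote of the "model" operator $\widetilde R=\begin{pmatrix}S_{\nu_1}&X_1T_{12}Y\\\mathbb O&S_{*,\nu_0}\end{pmatrix}$, following the method in the proof of Proposition \ref{prop31} (adapting \cite[Proposition 3.1]{g19}), and showing that the invariant subspace $\mathcal N\oplus(H^2_-)_{\nu_0}$ is of "$U_{\mathbb T,\kappa}$-type" with $\kappa\le\nu_0$, so that a genuine shift summand of multiplicity $\ge\nu_1-\nu_0>0$ remains in $V_R$. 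The extracted shift summand then gives the desired intertwiner $R\buildrel d\over\prec S$, and hence $T\buildrel d\over\prec S$. The case $\nu_1=\infty$ is considerably easier, since any finite absorption leaves infinitely many free shifts.
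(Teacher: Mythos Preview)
Your reduction to $R$ and the plan to invoke Lemma \ref{lemshift} match the paper exactly, and your observation that it suffices to show $V_R$ is not unitary is correct. However, from that point on you take a much harder road than necessary, and the road has a real gap.

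The problem is the descent step you yourself flag: solving the Hankel--Sylvester equation for $\widetilde\psi=\psi Y$ does \emph{not} produce a bounded $\psi$ on $\mathcal H_{00}$, because $Y$ has only dense (not closed) range. Your fallback, computing the isometric asymptote of the auxiliary operator $\widetilde R$ and transferring a shift summand back to $V_R$, runs into the same issue: the relation $\widetilde R\buildrel d\over\prec R$ goes the wrong way for inheriting shift summands (you would need $R\buildrel d\over\prec\widetilde R$ to argue as in your first paragraph). So neither branch of your construction closes.

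The paper avoids all of this with a short contradiction. Assume $V_R$ is unitary. Since $T_{00}\in C_{0\cdot}$, {\cite[Theorem 3]{k89}} forces $V_R\cong U_{\mathbb T,\nu_1}$, and the canonical intertwining map, written in block form relative to \eqref{uunu}, has a lower-right corner $X_0$ satisfying $X_0T_{00}=S_{*,\nu_1}X_0$ with $\operatorname{clos}X_0\mathcal H_{00}=(H^2_-)_{\nu_1}$. Combined with the hypothesis $S_{*,\nu_0}\buildrel d\over\prec T_{00}$ this yields $S_{*,\nu_0}\buildrel d\over\prec S_{*,\nu_1}$, hence $S_{\nu_1}\buildrel i\over\prec S_{\nu_0}$, contradicting $\nu_0<\nu_1$ by {\cite[Theorem 5/6]{sznf}}. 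Thus $V_R$ is not unitary, and Lemma \ref{lemshift} finishes. No Sylvester equation, no explicit intertwiner, and the role of $\nu_0<\nu_1$ is transparent.
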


\begin{proof} As in the beginning of the proof of Theorem \ref{thmmain1}, there exists a power bounded operator $R$ such that 
 $T\buildrel d \over \prec R$ and 
\begin{equation}\label{rr01} R=\begin{pmatrix}S_{\nu_1} & * \\ \mathbb O & T_{00}\end{pmatrix}.
\end{equation}
(Considerations related to polynomially boundedness in the construction of $R$ in Theorem \ref{thmmain1} can be replaced by 
considerations related to power boundedness.)  It is sufficient to prove the proposition for $R$. 

Denote by $\mathcal H_{00}$ the space on which $T_{00}$ acts. Assume that the isometric asymptote $V$ of $R$ is  unitary. By {\cite[Theorem 3]{k89}}, 
 $V\cong U_{\mathbb T,\nu_1}$, because $T_{00}$ is of class $C_{0\cdot}$. We may assume that $V=U_{\mathbb T,\nu_1}$. Then the canonical intertwining mapping $X$ has the form 
\begin{equation*} X=\begin{pmatrix} I_{H^2_{\nu_1}} & * \\ \mathbb O & X_0\end{pmatrix}\end{equation*}
with respect to the decompositions \eqref{rr01} and \eqref{uunu}, where  $X_0$ is a transformation such that  
$X_0T_{00}=S_{*,\nu_1}X_0$.
The relation 
\begin{equation*} \operatorname{clos}X(H^2_{\nu_1}\oplus\mathcal H_{00})=L^2_{\nu_1}\end{equation*} 
implies $\operatorname{clos}X_0\mathcal H_{00}=(H^2_-)_{\nu_1}$. Thus, 
$S_{*,\nu_0} \buildrel d \over \prec T_{00}\buildrel d \over \prec S_{*,\nu_1}$. 
Therefore 
\begin{equation*} S_{\nu_1}\cong (S_{*,\nu_1})^*\buildrel i \over \prec (S_{*,\nu_0})^*\cong S_{\nu_0},
\end{equation*}
a contradiction with the assumption $\nu_0<\nu_1$, see {\cite[Theorem 5/6]{sznf}}. Consequently, the isometric asymptote of $R$ is not unitary. 
By Lemma \ref{lemshift}, $R\buildrel d \over \prec S$. 
\end{proof}

\begin{proposition}\label{prop00}
Let  a polynomially bounded operator $T$ have the form \eqref{tt100}, 
 where $T_1\buildrel d \over \prec S$,  and $T_{00}$ is a $C_0$-operator.  Then $T\buildrel d \over \prec S$.
\end{proposition}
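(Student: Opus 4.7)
The plan is to follow the strategy of Proposition \ref{propnu0}, substituting its multiplicity-based contradiction with one that exploits the $H^\infty$-annihilator of the $C_0$-operator $T_{00}$. First, as in the opening of the proof of Theorem \ref{thmmain1}, I would use a transformation $X_1$ realizing $T_1\buildrel d \over \prec S$ to form
\[ R=\begin{pmatrix} S & X_1 T_2 \\ \mathbb O & T_{00}\end{pmatrix}, \]
where $T_2$ is the $(1,2)$-entry of $T$; then $X_1\oplus I_{\mathcal H_{00}}$ intertwines $T$ with $R$ and has dense range, so by transitivity of $\buildrel d \over \prec$ it suffices to prove $R\buildrel d \over \prec S$. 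By Lemma \ref{lemshift}, this in turn is equivalent to showing that the isometric asymptote $V$ of $R$ is not unitary.

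I would assume $V$ is unitary and seek a contradiction. Every $C_0$-operator is similar to a $C_0$-contraction and is therefore of class $C_{00}$; in particular $T_{00}$ is of class $C_{0\cdot}$. Then {\cite[Theorem 3]{k89}} forces $V\cong U_{\mathbb T}$, and, exactly as in the proof of Theorem \ref{thmmain1}, the canonical intertwining takes the block-triangular form
\[ X=\begin{pmatrix} I_{H^2} & * \\ \mathbb O & X_0\end{pmatrix}, \qquad X_0 T_{00}=S_* X_0, \quad \operatorname{clos} X_0 \mathcal H_{00}=H^2_-, \]
where $\mathcal H_{00}$ is the space on which $T_{00}$ acts. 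Thus $T_{00}\buildrel d \over \prec S_*$, and passing to adjoints and using the unitary equivalence $S_*^*\cong S$ produces an injective $Y\colon H^2\to\mathcal H_{00}$ with $YS=T_{00}^*Y$.

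At this point the $C_0$-hypothesis enters. I would pick $0\not\equiv\psi\in H^\infty$ with $\psi(T_{00})=\mathbb O$ and set $\phi=\widetilde\psi$; the identity $\varphi(T^*)=\widetilde\varphi(T)^*$ recalled in Section~2 gives $\phi(T_{00}^*)=\psi(T_{00})^*=\mathbb O$. Since both $S$ and $T_{00}^*$ are a.c.\ polynomially bounded, the polynomial identity $Yp(S)=p(T_{00}^*)Y$ extends by the usual bounded weak-$*$ approximation to $Y\phi(S)=\phi(T_{00}^*)Y=\mathbb O$. Injectivity of $Y$ then forces $\phi(S)=\mathbb O$, which is impossible because $\phi(S)$ is multiplication by $\phi\not\equiv 0$ on $H^2$. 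This contradiction shows that $V$ cannot be unitary, finishing the proof via Lemma \ref{lemshift}. The only step that is not pure bookkeeping is the extension of the intertwining from polynomials to the $H^\infty$-calculus; everything else is parallel to the arguments already used for Theorem \ref{thmmain1} and Proposition \ref{propnu0}.
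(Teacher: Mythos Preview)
Your argument is correct and actually more self-contained than the paper's. The paper proceeds exactly as you do up to the operator $R$, but then simply invokes {\cite[Lemma 5.2]{g25}} to conclude that the isometric asymptote of $R$ is $S$, whence $R\buildrel d\over\prec S$. You instead rerun the contradiction scheme of Proposition~\ref{propnu0}, replacing the multiplicity obstruction $S_{\nu_1}\buildrel i\over\prec S_{\nu_0}$ by the observation that $S$ cannot be annihilated by a nonzero $H^\infty$-function. This has the advantage of avoiding the external reference and making transparent exactly where the $C_0$ hypothesis enters; the paper's route is shorter but relies on a lemma proved elsewhere.

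One small point to tighten: the assertion ``every $C_0$-operator is similar to a $C_0$-contraction'' is not a standard fact in the polynomially bounded setting (the Bercovici--Prunaru result \cite{bp} gives quasisimilarity, not similarity), and you do not actually need it. The conclusion $T_{00}\in C_{0\cdot}$ follows directly: if $(X_{T_{00},+},V_0)$ is the isometric asymptote of $T_{00}$ and $\psi(T_{00})=\mathbb O$, then $\psi(V_0)X_{T_{00},+}=X_{T_{00},+}\psi(T_{00})=\mathbb O$, and since $X_{T_{00},+}$ has dense range, $\psi(V_0)=\mathbb O$; but for any nonzero a.c.\ isometry $V_0$ the map $\psi\mapsto\psi(V_0)$ is injective on $H^\infty$, forcing $V_0$ to act on $\{0\}$, i.e.\ $T_{00}\in C_{0\cdot}$. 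With that adjustment your proof is complete.
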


\begin{proof} As in the beginning of the proof of Theorem \ref{thmmain1}, there exists a polynomially  bounded operator $R$ such that 
 $T\buildrel d \over \prec R$ and 
\begin{equation*}
 R=\begin{pmatrix}S & * \\ \mathbb O & T_{00}\end{pmatrix}.
\end{equation*}
By {\cite[Lemma 5.2]{g25}}, the isometric asymptote of $R$ is $S$. Consequently,  $R\buildrel d \over \prec S$.
\end{proof}

Recall that  {\cite[Lemma IV.2.1]{ff}} allows to construct contractions satisfying assumptions of Propositions \ref{propnu0} and \ref{prop00} (not simultaneously) with nonzero transformations denoted by $*$ in \eqref{tt100}.

\section{Intertwining with some operators from both sides}

It is known during long time that if an operator $T$ has a normal orthogonal summand and is not a scalar multiple of the identity operator, then $T$ has a nontrivial hyperinvariant subspace. By {\cite[Theorem 5.1]{aglermccarthy}}, if $T$ is intertwined from both sides (by nonzero transformations) with some normal operators, then $T$ has a nontrivial invariant subspace. In this section a generalization of these statements is proved. 

Recall the definitions from \cite{r77}. 
An operator $A\in\mathcal L(\mathcal H)$ is called \emph{dominant}, if 
for every $\lambda\in\mathbb C$ there exists a constant $M_\lambda$ such that 
 $\|(A-\lambda)^*x\|\leq M_\lambda^{1/2}\|(A-\lambda)x\|$ for every  $x\in\mathcal H$.  Let $M>0$. An operator $A\in\mathcal L(\mathcal H)$ is called \emph{$M$-hyponormal}, 
if $\|(A-\lambda)^*x\|\leq M^{1/2} \|(A-\lambda)x\|$ for every  $\lambda\in\mathbb C$ and every $x\in\mathcal H$. 
 An $M$-hyponormal operator is dominant. Let $\mathcal M\in\operatorname{Lat} A$. It is easy to see that if $A$ is $M$-hyponormal (respectively, dominant), 
then $A|_{\mathcal M}$ is $M$-hyponormal (respectively, dominant). 
If $M=1$, then $M$-hyponormal operators are  \emph{hyponormal} (see {\cite[Proposition II.4.4(b)]{conwaysubnormal}}).  By {\cite[Proposition II.4.2]{conwaysubnormal}}, 
 every subnormal operator is hyponormal. 

Note that the definition of $M$-hyponormal operator is different from another definitions, where other letters instead of 
 $M$ are used. The references on many papers where such definitions are given and the correspondent properties of operators are considered are not given here. 
  
 \begin{theorem}\label{thmmain2} Suppose that $A$ is an $M$-hyponormal operator, $B$ is a  dominant operator, 
 $W_1$ and $W_2$ are nonzero transformations, and $T$ is an operator such that $W_1A^*=TW_1$ and $W_2T=BW_2$. 
If $T\neq\lambda I$ for any $\lambda \in\mathbb C$, then $T$ has a nontivial hyperinvariant subspace. 
\end{theorem}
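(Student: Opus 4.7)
The approach I would take combines three ingredients: standard hyperinvariant-subspace constructions from the commutant of $T$, the local spectral theory of \cite{r77} for dominant and $M$-hyponormal operators, and the transfer mechanism of \cite{k20}. First, I would consider the subspaces
\[
\mathcal{M}_1 = \overline{\operatorname{span}}\{RW_1 x : R \in \{T\}',\ x \in \mathcal{H}_A\}, \qquad \mathcal{M}_2 = \bigcap_{R \in \{T\}'} \ker(W_2 R),
\]
both of which are hyperinvariant for $T$ by construction. Since $W_1 \neq \mathbb{O}$, $\mathcal{M}_1 \neq \{0\}$, and since $W_2 \neq \mathbb{O}$, $\mathcal{M}_2 \neq \mathcal{H}_T$. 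If either is nontrivial we are done, so assume $\mathcal{M}_1 = \mathcal{H}_T$ and $\mathcal{M}_2 = \{0\}$. Because the classes of $M$-hyponormal and dominant operators are stable under restriction to invariant subspaces (as recalled just before the theorem), I can also replace $A$ by $A|_{(\ker W_1)^\perp}$ and $B$ by $B|_{\operatorname{clos} W_2 \mathcal{H}_T}$, arranging that $W_1$ is injective and $W_2$ has dense range, so that $A^* \buildrel i \over \prec T \buildrel d \over \prec B$.

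Next I would invoke \cite{r77}: $M$-hyponormal and dominant operators enjoy Bishop's property $(\beta)$ and SVEP, hence for every closed $F \subset \mathbb{C}$ the local spectral subspaces $\mathcal{H}_{A^*}(F)$ and $\mathcal{H}_B(F)$ are closed, hyperinvariant for $A^*$ and $B$ respectively, and jointly exhaust the ambient spaces as $F$ varies. I would then lift these to candidate hyperinvariant subspaces of $T$ via
\[
\mathcal{L}_F = \overline{\operatorname{span}}\{RW_1 x : R \in \{T\}',\ x \in \mathcal{H}_{A^*}(F)\}, \qquad \mathcal{N}_F = \bigcap_{R \in \{T\}'} (W_2 R)^{-1}\bigl(\mathcal{H}_B(F)\bigr),
\]
each of which is hyperinvariant for $T$ automatically from its definition in terms of the commutant, combined with the hyperinvariance of $\mathcal{H}_{A^*}(F)$ and $\mathcal{H}_B(F)$ in their respective ambient operators.

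The core of the argument, and the step I expect to be the main obstacle, is to exhibit a closed $F$ for which one of $\mathcal{L}_F$, $\mathcal{N}_F$ is both proper and nonzero. My plan is a contradiction argument: if every $\mathcal{L}_F$ and every $\mathcal{N}_F$ lay in $\{\{0\}, \mathcal{H}_T\}$, then by the monotonicity of the local spectral lattices the entire image $W_2 \mathcal{H}_T$ would localize at a single point $\lambda_B \in \sigma(B)$ and similarly $W_1 \mathcal{H}_A$ at a single $\lambda_{A^*} \in \sigma(A^*)$; the composed intertwining $W_2 W_1 A^* = B W_2 W_1$ together with the first-stage saturation $\mathcal{M}_1 = \mathcal{H}_T$, $\mathcal{M}_2 = \{0\}$ then forces $\lambda_B = \overline{\lambda_{A^*}} =: \lambda$, and the rigidity of $M$-hyponormal operators with singleton spectrum (they must be normal, hence scalar at that point), combined with the transfer lemma \cite[Theorem 15]{k20}, forces $T = \lambda I$ -- contradicting the standing hypothesis. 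Carrying this collapse through while the intertwiners are only injective and dense-range (rather than full quasiaffinities) is exactly where the \cite{r77}--\cite{k20} machinery has to be deployed most delicately.
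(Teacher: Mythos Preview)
Your opening reductions are fine and essentially match the paper's, but the main argument has a genuine gap and misses the key idea.

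First, the gap. You attribute to \cite{r77} that $M$-hyponormal and dominant operators ``enjoy Bishop's property $(\beta)$ and SVEP, hence for every closed $F$ the local spectral subspaces $\mathcal H_{A^*}(F)$ and $\mathcal H_B(F)$ are closed''. Radjabalipour's 1977 paper is not a local-spectral-theory paper; its Theorems 3 and 4 are Fuglede--Putnam type results (an intertwining between a co-$M$-hyponormal and a dominant operator forces normality and reduces). More importantly, you need closed local spectral subspaces for $A^*$, the \emph{adjoint} of an $M$-hyponormal operator, and this is generally false: co-hyponormal operators need not have property $(\beta)$ (the backward shift is the standard counterexample). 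Without closedness of $\mathcal H_{A^*}(F)$, your lifted subspaces $\mathcal L_F$ are not well-behaved, and the ``collapse to a single point'' argument in your last paragraph cannot be carried out as sketched. Even on the $B$ side, dominance alone does not obviously give property $(\beta)$ or force a dominant operator with one-point spectrum to be scalar.

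Second, what the paper actually does. The key observation you do not exploit is that for \emph{every} $R\in\{T\}'$ the composite $W_2RW_1$ intertwines $A^*$ with $B$:
\[
(W_2RW_1)A^* \;=\; W_2R\,TW_1 \;=\; W_2T\,RW_1 \;=\; B\,(W_2RW_1).
\]
If all these composites vanish, then $R(\operatorname{clos}W_1\mathcal G)\subset\ker W_2$ for every $R\in\{T\}'$, which immediately produces a nontrivial hyperinvariant subspace (this is morally your case $\mathcal M_1\neq\mathcal H_T$). If some $W_2RW_1\neq\mathbb O$, then after the obvious restriction/corestriction one obtains a \emph{quasiaffinity} $X$ with $X(A|_{\mathcal G_1})^* = (B|_{\mathcal K_1})X$, where $A|_{\mathcal G_1}$ is $M$-hyponormal and $B|_{\mathcal K_1}$ is dominant. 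Now the actual content of \cite[Theorem~3(a)]{r77} applies: such a quasiaffine relation forces both operators to be normal and unitarily equivalent, and \cite[Theorem~4]{r77} makes $\mathcal G_1$, $\mathcal K_1$ reducing. Setting $N=B|_{\mathcal K_1}$ one gets $N\buildrel i\over\prec A^*\buildrel i\over\prec T\buildrel d\over\prec B\buildrel d\over\prec N$, and now \cite[Theorem~15]{k20} (or, if $N$ is scalar, the eigenvalue argument you mention) finishes the proof.

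So the route is: produce a single nonzero intertwiner $A^*\to B$ out of the commutant of $T$, then invoke Radjabalipour's normality theorem once. No local spectral subspaces are needed, and in particular you never have to reason about spectral subspaces of the co-$M$-hyponormal operator $A^*$.
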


\begin{proof} Denote by $\mathcal G$, $\mathcal H$,  and $\mathcal K$ the spaces on which $A$, $T$ and $B$ act, respectively. 
Set \begin{equation*}\widetilde{\mathcal K}=\operatorname{clos}W_2\mathcal H,
 \ \ \widetilde{\mathcal G}=\mathcal G\ominus\ker W_1, 
\ \ \widetilde W_1=W_1|_{\widetilde {\mathcal G}}, 
\end{equation*} 
and define $\widetilde W_2$ as $W_2$ acting from $\mathcal H$ to $\widetilde{\mathcal K}$. Then 
\begin{equation*} \begin{gathered}\widetilde{\mathcal K}\in\operatorname{Lat} B,  \ \ \widetilde{\mathcal G}\in\operatorname{Lat} A,   \\
\ \ \widetilde W_1(A|_{\widetilde{\mathcal G}})^*=T\widetilde W_1,
\ \ \widetilde W_2 T= B|_{\widetilde{\mathcal K}}\widetilde W_2, 
\ \ \ker \widetilde W_1=\{0\},   \ \operatorname{clos}\widetilde W_2\mathcal H = \widetilde{\mathcal K}.\end{gathered}
\end{equation*} 
$A|_{\widetilde{\mathcal G}}$ is $M$-hyponormal and $B|_{\widetilde{\mathcal K}}$ is dominant. 
Therefore, we may assume that 
\begin{equation*} \ker  W_1=\{0\} \ \text{ and }  \ \operatorname{clos} W_2\mathcal H = \mathcal K.
\end{equation*} 

Set $\mathcal M_1=\operatorname{clos} W_1\mathcal G$, $\mathcal M_2=\ker W_2$, $Z_2=W_2|_{\mathcal M_2^\perp}$, 
and define $Z_1$ as $W_1$  acting from $\mathcal G$ to  $\mathcal M_1$. Then $\mathcal M_1$,
 $\mathcal M_2\in\operatorname{Lat}T$, $\mathcal M_1\neq\{0\}$, and $\mathcal M_2\neq\mathcal H$. 
For every $R\in\{T\}'$ set $R_0=P_{\mathcal M_2^\perp}R|_{\mathcal M_1}$. 
Then  \begin{equation*}Z_2 R_0 Z_1 A^* = Z_2 R_0 T|_{\mathcal M_1}  Z_1=
Z_2 P_{\mathcal M_2^\perp}T|_{\mathcal M_2^\perp} R_0 Z_1 = B Z_2 R_0 Z_1.  
\end{equation*} 
Consider two cases. \emph{First case}: for every $R\in\{T\}'$ we have $Z_2 R_0 Z_1=\mathbb O$. 
Since $Z_1$ and $Z_2$ are quasiaffinities, the last equality means that $R\mathcal M_1\subset\mathcal M_2$ 
for every $R\in\{T\}'$. Consequently, 
 \begin{equation*} \mathcal N=\vee_{R\in\{T\}'}Rx\in\operatorname{Hlat}T \ \text{ and }
 \ \{0\}\neq\mathcal N\subset\mathcal M_2\neq\mathcal H
\end{equation*} 
for every $0\neq x\in\mathcal M_1$.  

\emph{Second case}: there exists $R\in\{T\}'$ such that $Z_2 R_0 Z_1\neq\mathbb O$. Set 
\begin{equation*}\mathcal K_1=\operatorname{clos}Z_2 R_0 Z_1\mathcal G, \ \ \mathcal G_1=\mathcal G\ominus\ker Z_2 R_0 Z_1,
\end{equation*}
and let $X=Z_2 R_0 Z_1|_{\mathcal G_1}$ be considered as a transformation from $\mathcal G_1$ to $\mathcal K_1$. 
Then $X$ is a quasiaffinity which realizes the relation 
\begin{equation*} (A|_{\mathcal G_1})^*\prec B|_{\mathcal K_1}.
\end{equation*}
Since $A|_{\mathcal G_1}$ is $M$-hyponormal and $B|_{\mathcal K_1}$ is dominant, 
it follows from the last relation and {\cite[Theorem 3(a)]{r77}} 
that $(A|_{\mathcal G_1})^*$ and $B|_{\mathcal K_1}$ are normal,
 and  $(A|_{\mathcal G_1})^*\cong B|_{\mathcal K_1}$. By  {\cite[Theorem 4]{r77}}, 
$\mathcal G_1$ and $\mathcal K_1$  are reducing subspaces for $A$ and $B$, respectively. 
Consequently, $A\buildrel d\over \prec A|_{\mathcal G_1}$ and $B\buildrel d\over \prec B|_{\mathcal K_1}$.
Set $N=B|_{\mathcal K_1}$. Then
\begin{equation*} N\buildrel i\over \prec A^*\buildrel i\over \prec T\buildrel d\over \prec B\buildrel d\over \prec N. 
\end{equation*} 
If $N$ has a nontrivial hyperinvariant subspace, then $T$ has a nontrivial hyperinvariant subspace by 
{\cite[Theorem 15]{k20}}. If $N$ has no nontrivial hyperinvariant subspace, it means that $N=\lambda I$ 
for some $\lambda\in\mathbb C$, because $N$ is normal. 
Then the relation $\lambda I\buildrel i\over\prec T$ implies that $\lambda\in\sigma_p(T)$. If $T\neq \lambda I$, then  
 $T$ has a nontrivial hyperinvariant subspace.
\end{proof}
 
Note that if $A$ and $B$ in the assumption of Theorem  \ref{thmmain2} are subnormal,
 then {\cite[Theorem 3(a)]{r77}} in the proof of Theorem \ref{thmmain2} can be replaced by {\cite[Proposition II.10.6]{conwaysubnormal}}.


\end{document}